\documentclass[11pt]{amsart}
\usepackage{amsmath,amssymb,amsthm,amscd,verbatim}
\usepackage{graphicx}
\usepackage{epsfig}
\usepackage{stmaryrd}

\setlength{\textwidth}{6.5in}     
\setlength{\oddsidemargin}{0in}   
\setlength{\evensidemargin}{0in}  
\setlength{\textheight}{8.5in}    
\setlength{\topmargin}{0in}       
\setlength{\headheight}{0in}      
\setlength{\headsep}{.3in}         
\setlength{\footskip}{.5in}       
\vfuzz2pt

\input{epsf}

\newtheorem{prelem}{{\bf Theorem}}

\newtheorem{theorem}{Theorem}[section]
\newtheorem{corollary}[theorem]{Corollary}
\newtheorem{definition}[theorem]{Definition}

\newtheorem{lemma}[theorem]{Lemma}
\newtheorem{proposition}[theorem]{Proposition}

\newtheorem{remarka}[theorem]{Remark}
\newenvironment{remark}{\begin{remarka}\rm}{\hfill\rule{2mm}{2mm}\end{remarka}}
\newtheorem{examplea}[theorem]{Example}
\newenvironment{example}{\begin{examplea}\rm}{\hfill\rule{2mm}{2mm}\end{examplea}}

\def\mod {{\rm mod}\ }

\def\Ex {{\mathbb E}}
\def\Pr {{\rm Pr}}

\title{A structure theorem for Boolean functions with small total influences}
\author{Hamed Hatami}
\address{School of Computer Science, McGill University, Montr\'eal,  Canada}
\email{hatami@cs.mcgill.ca }

\date{}

\begin{document}

\maketitle

\begin{center}
\emph{Dedicated to the memory of Avner Magen}
\end{center}

\begin{abstract}
We show that on every product probability space, Boolean functions with small total influences are essentially the ones that are almost measurable with respect to certain natural sub-sigma algebras. This theorem in particular describes the structure of monotone set properties that do not exhibit sharp thresholds.

Our result generalizes the core of Friedgut's seminal work~[Ehud Friedgut.
\newblock Sharp thresholds of graph properties, and the {$k$}-sat problem.
\newblock {\em J. Amer. Math. Soc.}, 12(4):1017--1054, 1999.] on properties of random graphs to the setting of arbitrary Boolean functions on general product probability spaces, and improves the result of Bourgain in his appendix to Friedgut's paper.
\end{abstract}
\noindent {{\sc AMS Subject Classification:} \quad 28A35 - 06E30}
\newline
{{\sc Keywords:} influence, threshold, phase transition, Boolean function.

\vspace{0.5cm}


\section{Introduction \label{sec:intro}}

We call a function \emph{Boolean} if its range is $\{0,1\}$. The influence of a variable on a Boolean function measures the sensitivity of the function with respect to the changes in that variable. This basic notion  arises naturally in many different areas such as statistical physics and probability theory (e.g. phase transition, percolation), computer science (e.g. complexity theory, hardness of approximation, machine learning), combinatorics (e.g. set systems, products of graphs), economics (e.g. social choice). In many instances, when a Boolean function  satisfies some nice properties or fits a description, it is possible to bound the influences of its variables. Therefore, Boolean functions with small total influences arise frequently in  various contexts, and they are studied for different purposes.

One of the major motivations for studying Boolean functions with small total influences  is a profound connection to the \emph{threshold phenomenon}, discovered by Margulis~\cite{MR0472604} and Russo~\cite{MR618273}. The threshold behavior is the quick transition of a property  from being very unlikely to hold to being very likely to hold as certain parameter $p$ increases. This behavior occurs in various settings, and it is an instance of the phenomenon of phase transition in statistical physics which explains the rapid change of behavior in many physical processes. One of the main questions that arises in studying phase transitions is: ``How sharp is the threshold?'' That is how short is the interval in which the transition occurs. Margulis~\cite{MR0472604} and Russo~\cite{MR618273} observed that the sharpness of the threshold  is controlled by the total influence of the indicator function of the property. Hence in order to characterize the properties that do not exhibit sharp thresholds, one needs to understand the structure of Boolean functions that have small total influences.

Due to these motivations, Boolean functions with small total influences are studied extensively, and some remarkable results about their structure are discovered  e.g. by Kalai, Kahn and Linial~\cite{KKL}, Talagrand~\cite{MR1303654}, Bourgain and Kalai~\cite{BK}, Friedgut~\cite{MR1645642,MR1678031}, and Bourgain~\cite{Bourgain99}. The KKL inequality~\cite{KKL} and Friedgut's threshold theorem~\cite{MR1678031} are both mentioned in~\cite{Bourgain2000} as notable consequences of the interaction between harmonic analysis and combinatorics. Let us also mention the more recent work of Mossel, O'Donnell, and Oleszkiewicz~\cite{MR2630040} which studies Boolean functions where all variables have small influences, and now is one of the main tools in the study of the hardness of approximation in theoretical computer science.

The purpose of this article is to essentially characterize Boolean functions with small total influences by showing that every such function is almost  measurable with respect to a certain sub-sigma algebra.

\subsection{Notations and definitions}

For every statement $P$, we define $1_{[P]}:=1$ if $P$ is true, and $1_{[P]}:=0$ otherwise. For every natural number $n$, denote $[n]:=\{1,\ldots, n\}$, and for every set $S$, let $\mathcal{P}(S)$ denote the set of all subsets of $S$. Consider a probability space $X=(\Omega,\mathcal{F},\mu)$, and let $X^n$ denote the corresponding product space endowed with the product probability measure $\mu^n$. Throughout this article $X$ is always a probability space and $n$ is a positive integer, and all asymptotics  are meant for $n \rightarrow \infty$.

As usual, we use $O(X)$ to denote a quantity bounded in magnitude by $CX$ for some absolute constant $C$; if we need $C$ to depend on a parameter, we will indicate this by subscripts. Thus for instance $O_I(1)$ is a quantity bounded in magnitude by some expression $C_I$ depending on $I$. We use $o(X)$ to denote a quantity $Y$ with $\lim_{n \rightarrow \infty} Y/X=0$.

For every $x=(x_1,\ldots,x_n) \in X^n$, let $x_S:=(x_i: i \in S) \in X^S$  denote the restriction of $x$ to the coordinates in $S$. For two disjoint sets $S,T \subseteq [n]$, and elements $x \in X^S$ and $y \in X^T$, let $(x,y)$ denote the unique element $z \in X^{S \cup T}$ with $z_S=x$ and $z_T=y$.

Consider a subset $S \subseteq [n]$. In the sequel, by a slight abuse of notation, we will view functions on $X^S$ as also being functions on $X^n$. More precisely, we identify every function $g:X^S \rightarrow \mathbb{R}$ with the function on $X^n$ that maps every $x \in X^n$ to $g(x_S)$.

Let $f:X^n \rightarrow \{0,1\}$ be a  measurable function. For $1 \le j \le n$, the \emph{influence} of the $j$-th variable on $f$ is defined as
\begin{equation}
\label{eq:infDef}
I_f(j) = \Pr[f(x_1,\ldots,x_{j-1},x_j,x_{j+1},\ldots,x_n) \neq f(x_1,\ldots,x_{j-1},y_j,x_{j+1},\ldots,x_n)],
\end{equation}
where $x_1,\ldots,x_n,y_1,\ldots,y_n$ are i.i.d. random variables taking values in $X$ according to its probability measure.  The \emph{total influence} of $f$, denoted by $I_f$, is defined as
\begin{equation}
\label{eq:totalDef}
I_f := \sum_{j=1}^n I_f(j).
\end{equation}

\begin{remark}
\label{rem:finiteness}
Let $X=(\Omega,\mathcal{F},\mu)$, and $f:X^n \to \{0,1\}$ be measurable. It follows from the measurability of $f$ that the set $$\{(x_1,\ldots,x_n,y_j) : f(x_1,\ldots,x_{j-1},x_j,x_{j+1},\ldots,x_n) \neq f(x_1,\ldots,x_{j-1},y_j,x_{j+1},\ldots,x_n)\} \subseteq X^{n+1}$$ is measurable and thus $I_f(j)$ is well-defined. Moreover in many situations, to prove an statement about influences, one can assume that  $\Omega$ is a finite set. Indeed since $f$ is measurable, it is possible to find a \emph{finite} sub-$\sigma$-algebra $\mathcal{G}$ of $\mathcal{F}$ and a  function $g:\Omega^n \to \{0,1\}$, measurable with respect to the product $\sigma$-algebra generated by  $\mathcal{G}$, such that $\Pr[f(x) \neq g(x)]$ is arbitrarily small. Since $|I_f(j)-I_g(j)| \le 2 \Pr[f(x) \neq g(x)]$, the differences between the influences are also small.
\end{remark}

A particular case of interest is when $X$ is defined by the Bernoulli distribution $\mu_p$ on $\{0,1\}$ with parameter $0<p<1$, i.e. $\mu_p(\{1\})=p$ and $\mu_p(\{0\})=1-p$. We refer to the corresponding product probability distribution $\mu_p^n$ on $\{0,1\}^n$ as the \emph{$p$-biased distribution}.

A function $f:\{0,1\}^n \rightarrow \{0,1\}$ is called \emph{increasing} if $f(x) \le f(y)$, for every $x,y \in \{0,1\}^n$ satisfying $x_i \le y_i$ for every $i \in [n]$. For a set $A \subseteq [n]$, we say that a function $f:X^n \rightarrow \mathbb{R}$ depends only on the coordinates in $A$, if $f(x)=f(y)$ for every $x,y \in X^n$ with $x_A=y_A$.

\section{Main results \label{sec:main}}

Margulis~\cite{MR0472604} and Russo~\cite{MR618273} observed that for every increasing function $f:\{0,1\}^n \rightarrow \{0,1\}$, we have
\begin{equation}
\label{eq:MargulisRusso}
2p(1-p)\frac{d \mu_p(f)}{dp}=I_f,
\end{equation}
where $\mu_p(f) := \int f(x) d\mu_p^n(x)$ and $I_f$ is defined according to the $p$-biased distribution. This shows that the rate  at which  $\mu_p(f)$ increases  with respect to the increase in $p$ is controlled by $I_f$.
Consequently, in order to characterize the properties that do not exhibit sharp thresholds,  one needs to understand the structure of Boolean functions that have small total influences. We refer the reader to \cite{MR1678031} for more details.

The problem of finding general conditions under which a sharp threshold does not occur is first investigated by Russo~\cite{MR618273,MR671248}. Later, Talagrand~\cite{MR1303654}, extending the work of Russo, showed that for $p$ that is not too small, if the total influence of $f$ is small, then there are variables with large influences.

In the setting of the $p$-biased distribution, for sufficiently large $p$, the works of Talagrand~\cite{MR1303654}, Friedgut and Kalai~\cite{MR1371123}, Bourgain and Kalai~\cite{BK}, and Friedgut~\cite{MR1645642} provide a good understanding of the situation. Intuitively these results say that the total influence of $f$ is large, unless the value of $f(x)$ is determined only by ``local information'' about $x$, e.g. by a few number of coordinates.  As the following simple example shows, these results turn out however not to be useful when $p$ is small, in particular when $\log \frac{1}{p} \sim \log n$, often the case in applications.

\begin{example}
\label{ex:simple}
Set $p=n^{-1}$, and let $f:\{0,1\}^n \rightarrow \{0,1\}$ be defined as $f(x)=1$ if and only if $x \neq (0,\ldots,0)$. Then
$I_f(1)=\ldots=I_f(n) \le 2 p$, and so $I_f \le 2$. However note that there is no variable with large influence, and also $f$ does not depend only on a small set of its coordinates.
Indeed for every constant size set $A \subseteq [n]$, we have $$\Ex[f(x) | x_A=(0,\ldots,0)]=1-(1-p)^{n-|A|} = 1-\frac{1}{e}\pm o(1),$$
where $x$ is a random variable taking values in $\{0,1\}^n$ according to the $p$-biased distribution. Since $\Pr[x_A=(0,\ldots,0)] \ge 1 -|A| p =1-o(1)$, we conclude that for every function $g$ that depends only on the coordinates in $A$, we have $\| f-g\|_1 \ge \frac{1}{e} - o(1)$.
\end{example}

No essential progress on the case of small $p$ was made until the breakthrough work of Friedgut~\cite{MR1678031}. He gave a satisfactory description for functions with small total influences when they correspond to graph or hypergraph properties. Friedgut's theorem  is now an important tool in the study of the threshold behavior of graph properties (see~\cite{MR2116574}). A graph property on $n$-vertex graphs is modeled by a function $f:\{0,1\}^{n \choose 2} \rightarrow \{0,1\}$, where each  of the ${n \choose 2}$ coordinates correspond to the presence of one of the  ${n \choose 2}$ possible edges. Since a graph property is invariant under graph isomorphisms, every such function $f$ is invariant under several permutations of its coordinates. Various steps of Friedgut's proof rely heavily on these symmetry assumptions, and do not extend to the more general settings. However he conjectured~\cite{MR1678031} that his theorem holds without requiring  any symmetry assumptions.

In Theorem~\ref{thm:main},  we  generalize the core of Friedgut's work~\cite{MR1678031} form graph properties on their corresponding $p$-biased probability space to general properties (with no symmetry assumptions) on general product probability spaces. Previously, in this general setting,  the situation was only partially understood by a result of Bourgain:

\begin{theorem}[{\cite[Proposition 1]{Bourgain99}}]
\label{thm:Bourgain}
Let $f:(\{0,1\}^n,\mu_p^n) \rightarrow \{0,1\}$ be an increasing function with $\int f= \alpha >0$, and suppose $p=o_{I_f}(1)$.  There exist $\delta(\alpha),C(\alpha)>0$ and a subset $S \subseteq [n]$ with $|S| \le C(\alpha) I_f$ such that
$$\Ex [f(x)| x_S=(1,\ldots,1)] \ge \alpha+\delta(\alpha),$$
where $x$ is a random variables taking values in $\{0,1\}^n$ according to the $p$-biased distribution.
\end{theorem}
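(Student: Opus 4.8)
The plan is to prove Theorem~\ref{thm:Bourgain} via harmonic analysis on the $p$-biased cube, following Bourgain's approach. First I would reduce to the finite setting (as in Remark~\ref{rem:finiteness}) and set up the Fourier--Walsh expansion with respect to the $p$-biased measure, writing $f = \sum_{T \subseteq [n]} \hat f(T)\, u_T$ where $u_i(x_i) = (x_i - p)/\sqrt{p(1-p)}$ and $u_T = \prod_{i \in T} u_i$. The key identities are $\hat f(\emptyset) = \alpha$, $\sum_{T} \hat f(T)^2 = \alpha$ (since $f$ is Boolean), and $I_f = \sum_i \sum_{T \ni i} \hat f(T)^2 = \sum_T |T|\, \hat f(T)^2$. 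The plan is to show that the Fourier mass of $f$ is concentrated on a bounded number of levels and then to extract a small influential set from the low-level coefficients.

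Next I would exploit that $f$ is increasing together with $p$ being small. The first-level coefficients satisfy $\hat f(\{i\}) = \mathbb{E}[f \cdot u_i] \ge 0$ by monotonicity, and in fact $\hat f(\{i\}) = \sqrt{p(1-p)}\, \Ex[f(x)\mid x_i = 1] - \sqrt{p(1-p)}\,\Ex[f(x)\mid x_i=0]$, which for small $p$ is comparable to $\sqrt{p/(1-p)}\cdot I_f(i)$ up to lower-order terms; so the first-level Fourier weight $W_1 := \sum_i \hat f(\{i\})^2$ is roughly $\tfrac{p}{1-p}\sum_i I_f(i)^2$, which is small when $p$ is small. The heart of the matter is a \emph{lower bound} argument: if $f$ were close (in $L^1$, hence in $L^2$ since $f$ is Boolean) to a junta-free ``spread-out'' function — more precisely, if $\Ex[f(x)\mid x_S = (1,\ldots,1)]$ stayed near $\alpha$ for every small set $S$ — then one shows the higher-level Fourier weights must be large, contradicting the bound $I_f = \sum_T |T|\hat f(T)^2$, i.e. $\sum_{|T| \ge 2} \hat f(T)^2 \le \tfrac12 I_f$ already forces most of the weight (beyond $\hat f(\emptyset)^2 = \alpha^2$) onto levels $0$ and $1$. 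Combining $\alpha = \sum_T \hat f(T)^2 = \alpha^2 + W_1 + \sum_{|T|\ge 2}\hat f(T)^2$ with $W_1 = o_{I_f}(1)$ and $\sum_{|T|\ge 2}\hat f(T)^2 \le I_f/2$ would already be too lossy, so instead one needs the finer observation that the \emph{low-degree, low-$L^2$-norm} part of $f$ cannot account for the gap $\alpha - \alpha^2 = \alpha(1-\alpha)$; hence there is a coordinate $i$ with $\hat f(\{i\})$ not too small, equivalently $\Ex[f\mid x_i=1] \ge \alpha + \delta_0$ for some $\delta_0 = \delta_0(\alpha) > 0$.

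From one good coordinate one bootstraps to the set $S$. Conditioning on $x_i = 1$ yields a new increasing function $f' := f(\cdot \mid x_i = 1)$ on $\{0,1\}^{n-1}$ with $\int f' \ge \alpha + \delta_0$ and total influence $I_{f'} \le I_f$ (conditioning does not increase total influence for monotone functions, up to the removed coordinate's influence). If $\int f'$ has not yet reached the target $\alpha + \delta(\alpha)$, repeat the argument on $f'$. Each step increases the expectation by an amount bounded below in terms of $\alpha$ (using that the expectation stays in a range bounded away from $1$, say $\le 1 - \delta(\alpha)$, until we stop), so after $O_\alpha(I_f)$ steps — the number of steps is controlled because each step ``spends'' at least a fixed fraction of the remaining influence budget — we obtain $S$ with $|S| \le C(\alpha) I_f$ and $\Ex[f \mid x_S = (1,\ldots,1)] \ge \alpha + \delta(\alpha)$. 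I expect the main obstacle to be making the quantitative dependence of $\delta_0(\alpha)$ on $\alpha$ uniform along the iteration and ensuring the ``one good coordinate'' step is genuinely driven by the smallness of $p$ (so that the first-level weight being small is not itself an obstruction but rather the mechanism forcing a single coordinate's conditional bias to be large) — this is precisely the delicate Fourier-analytic estimate where $p = o_{I_f}(1)$ is used, and getting the bookkeeping right so the number of iterations stays $O_\alpha(I_f)$ rather than growing is the crux.
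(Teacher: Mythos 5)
Note first that the paper does not prove Theorem~\ref{thm:Bourgain} at all: it is quoted from Bourgain's appendix, and the paper's own route to a stronger statement of this kind is Corollary~\ref{cor:monotone}, deduced from Theorem~\ref{thm:mainpbiased} together with the FKG inequality. Your proposal attempts a direct proof, but it has a genuine gap at its central step: the claim that there is always a single coordinate $i$ with $\Ex[f\mid x_i=1]\ge \alpha+\delta_0(\alpha)$, which can then be iterated greedily. This is false. Take the tribes function $f=\bigvee_{j=1}^{m}(x_{j1}\wedge x_{j2})$ with $p$ chosen so that $(1-p^2)^m=\tfrac12$, i.e.\ $p\sim\sqrt{\ln 2/m}$. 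Then $I_f=O(1)$ and $p=o(1)$, yet for every coordinate $\Ex[f\mid x_i=1]-\alpha=O(p)=o(1)$; the level-one Fourier weight $W_1$ tends to $0$, and the variance sits at level $|T|\approx I_f$. This is exactly why the conclusion allows $|S|$ up to $C(\alpha)I_f$ rather than $C(\alpha)$: one must extract a whole set of size up to $O(I_f)$ at once (in Bourgain's argument, a set $S$ on which $|F_S(x)|$ is large, which via $F_S(x)=\widehat f(S)\prod_{i\in S}r(x_i)$ and $|\widehat f(S)|\le p^{|S|/2}$ forces most coordinates of $S$ to equal $1$), not one coordinate at a time. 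Your dichotomy "level $\ge 2$ weight is at most $I_f/2$, hence level one carries $\alpha(1-\alpha)$ minus a small error" also cannot be made quantitative, since $I_f$ need not be small compared with $\alpha(1-\alpha)$.

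Three further problems compound this. (i) The relation between $\widehat f(\{i\})$ and $I_f(i)$ is inverted: for increasing $f$ on $(\{0,1\}^n,\mu_p^n)$ one has $\widehat f(\{i\})=I_f(i)/(2\sqrt{p(1-p)})$, which is \emph{large} relative to $I_f(i)$ when $p$ is small; correspondingly $W_1$ need not be $o(1)$ (in Example~\ref{ex:simple} it is about $e^{-2}$), so smallness of $p$ does not act through the mechanism you describe. (ii) Conditioning on $x_i=1$ can increase the total influence by a factor of order $1/p$ (already for $f=x_1\wedge x_2$), so the bookkeeping "each step spends a fixed fraction of the influence budget" does not close. (iii) The one substantive analytic input you invoke --- that the low-degree part of $f$ with small pointwise values cannot carry the variance --- is precisely Bourgain's hypercontractivity-based estimate (the inequality quoted as (2.17) in Steps~II of this paper); leaving it as "a finer observation" leaves the proof without its engine. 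As written, the proposal does not constitute a proof.
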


Let  $\mathcal{J}=\{J_S\}_{S \subseteq [n]}$ be a collection of measurable functions $J_S:X^S \rightarrow \{0,1\}$.
Define the map $J_\mathcal{J}:X^n \rightarrow \mathcal{P}([n])$ as $J_\mathcal{J}:x \mapsto \bigcup_{S \subseteq [n], J_S(x)=1} S$, and note that the map $x \mapsto (J_\mathcal{J}(x), x_{J_\mathcal{J}(x)})$ is measurable. Let $\mathcal{F}_\mathcal{J}$ be the sub-$\sigma$-algebra on $X^n$ induced by the map $x \mapsto (J_\mathcal{J}(x),x_{J_\mathcal{J}(x)})$, i.e. it is the coarsest $\sigma$-algebra on $X^n$ which makes this map measurable.

\begin{definition}
\label{def:pseudojunta}
For a constant $k>0$, a \emph{$k$-pseudo-junta} is a function $f:X^n \rightarrow \{0,1\}$ which is measurable with respect to $\mathcal{F}_\mathcal{J}$ for some $\mathcal{J}$ satisfying
$$\int |J_\mathcal{J}(x)| dx \le k.$$
\end{definition}

The functions that depend on a small number of coordinates are usually referred to as \emph{juntas}. The following example explains the choice of the name ``pseudo-junta'' in Definition~\ref{def:pseudojunta}.

\begin{example}
Consider a set $A \subseteq [n]$ with $|A| \le k$. Every measurable function $f:X^n \rightarrow \{0,1\}$ that depends only on coordinates in $A$ is a $k$-pseudo-junta. Indeed let $\mathcal{J}$ be  defined by setting $J_S \equiv 1$ if $S=A$, and $J_S \equiv 0$ otherwise. Then $J_\mathcal{J} \equiv A$, and hence $f$ is measurable with respect to $\mathcal{F}_\mathcal{J}$. Furthermore $\int |J_\mathcal{J}(x)| dx = k$.
\end{example}

The following simple proposition shows that $k$-pseudo-juntas have small total influences.

\begin{proposition}
\label{prop:direct}
Let  $f:X^n \rightarrow \{0,1\}$ be a $k$-pseudo-junta. Then $I_f \le 2k$.
\end{proposition}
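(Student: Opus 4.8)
The plan is to bound each individual influence $I_f(j)$ by twice the probability that the $j$-th coordinate is ``relevant'' in the sense of lying in $J_\mathcal{J}(x)$, and then sum over $j$. Fix a collection $\mathcal{J}$ witnessing that $f$ is a $k$-pseudo-junta, so that $f$ is measurable with respect to $\mathcal{F}_\mathcal{J}$, the $\sigma$-algebra generated by $x \mapsto (J_\mathcal{J}(x), x_{J_\mathcal{J}(x)})$. The key observation is that if $j \notin J_\mathcal{J}(x)$ \emph{and} $j \notin J_\mathcal{J}(x')$, where $x'$ is obtained from $x$ by re-randomizing the $j$-th coordinate, then the pair $(J_\mathcal{J},x_{J_\mathcal{J}})$ takes the same value at $x$ and $x'$ (since none of the sets $S$ that get unioned into $J_\mathcal{J}$ contains $j$, flipping coordinate $j$ changes neither which sets are active nor the restriction to the active coordinates), and hence $f(x)=f(x')$ because $f$ factors through this map.

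First I would set up the coupling: let $x=(x_1,\dots,x_n)$ and let $y_j$ be an independent copy of the $j$-th coordinate, and write $x^{(j)}$ for the point with $j$-th coordinate replaced by $y_j$. Then by definition $I_f(j) = \Pr[f(x) \neq f(x^{(j)})]$. By the observation above, the event $\{f(x) \neq f(x^{(j)})\}$ is contained in $\{j \in J_\mathcal{J}(x)\} \cup \{j \in J_\mathcal{J}(x^{(j)})\}$, so by the union bound $I_f(j) \le \Pr[j \in J_\mathcal{J}(x)] + \Pr[j \in J_\mathcal{J}(x^{(j)})]$. Since $x^{(j)}$ has the same distribution as $x$, both terms equal $\Pr[j \in J_\mathcal{J}(x)]$, giving $I_f(j) \le 2\Pr[j \in J_\mathcal{J}(x)]$. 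Summing over $j \in [n]$ and exchanging sum and integral yields
\[
I_f = \sum_{j=1}^n I_f(j) \le 2 \sum_{j=1}^n \Pr[j \in J_\mathcal{J}(x)] = 2 \int \sum_{j=1}^n 1_{[j \in J_\mathcal{J}(x)]}\, dx = 2 \int |J_\mathcal{J}(x)|\, dx \le 2k,
\]
which is the desired bound.

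The only point requiring care — and the place I would expect a referee to want a sentence of justification — is the measurability-flavored claim that $j \notin J_\mathcal{J}(x)$ together with $f$ being $\mathcal{F}_\mathcal{J}$-measurable forces $f$ to be unchanged under re-randomizing coordinate $j$, on the relevant event. This is not a pointwise statement about a single $x$ but must hold almost surely; it follows because $\mathcal{F}_\mathcal{J}$-measurability means $f = \tilde f \circ \Phi$ for some measurable $\tilde f$ where $\Phi(x) = (J_\mathcal{J}(x), x_{J_\mathcal{J}(x)})$, and on the event that $j$ lies in neither $J_\mathcal{J}(x)$ nor $J_\mathcal{J}(x^{(j)})$ one has $\Phi(x) = \Phi(x^{(j)})$ literally, hence $f(x)=f(x^{(j)})$. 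Invoking Remark~\ref{rem:finiteness} to reduce to finite $\Omega$ if one wishes to avoid any measure-theoretic subtlety is also an option, but is not strictly necessary here. Everything else is a routine union bound and Tonelli's theorem.
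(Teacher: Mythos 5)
Your proof is correct and follows essentially the same route as the paper: the contrapositive observation that $f(x)\neq f(x^{(j)})$ forces $j\in J_\mathcal{J}(x)\cup J_\mathcal{J}(x^{(j)})$, a union bound, and summing over $j$ to recover $2\int |J_\mathcal{J}(x)|\,dx\le 2k$. Your extra justification of the key step via $f=\tilde f\circ\Phi$ is a welcome elaboration of what the paper states in one sentence.
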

\begin{proof}
Since $f$ is a $k$-pseudo-junta, it is measurable with respect to $\mathcal{F}_\mathcal{J}$ for some $\mathcal{J}$ with $\int |J_\mathcal{J}(x)| \le k$.
Let $x_1,\ldots,x_n,y_1,\ldots,y_n$ be i.i.d. random variables taking values in $X$ according to its probability measure, and define the random variables $x=(x_1,\ldots,x_n)$ and $x^{(j)}=(x_1,\ldots,x_{j-1},y_j,x_{j+1},\ldots,x_n)$ for every $j \in [n]$.  It follows from the definition
of $J_{\mathcal{J}}$ and the assumption that $f$ is measurable with respect to  $\mathcal{F}_\mathcal{J}$ that $j \in J_\mathcal{J}(x) \cup J_\mathcal{J}(x^{(j)})$ if $f(x) \neq f(x^{(j)})$. Hence
\begin{eqnarray*}
I_f &=& \sum_{j \in [n]}  \Pr\left[f(x) \neq f(x^{(j)})\right] \le \sum_{j \in [n]} \Pr\left[j \in J_\mathcal{J}(x) \cup J_\mathcal{J}(x^{(j)})\right] \\ &\le& 2 \sum_{j \in [n]} \Pr[j \in J_\mathcal{J}(x)] \le 2 \int |J_\mathcal{J}(x)| dx \le 2k.
\end{eqnarray*}
\end{proof}

Our main result is an inverse theorem which says that essentially the inverse of Proposition~\ref{prop:direct} is also true.

\begin{theorem}[Main Theorem]
\label{thm:main}
Consider a measurable function $f:X^n \rightarrow \{0,1\}$. For every $\epsilon>0$, there exists a
$e^{10^{15}  \epsilon^{-3} \lceil I_f \rceil ^3}$-pseudo-junta $h:X^n \rightarrow \{0,1\}$ such that $\| f - h\|_1 \le \epsilon$.
\end{theorem}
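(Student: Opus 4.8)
The plan is to build $h$ by an iterative ``revealing'' procedure governed by a monotone potential that measures how far the current partial description of $f$ is from being $\{0,1\}$-valued.

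First I would use Remark~\ref{rem:finiteness} to reduce to the case where $\Omega$ is finite, absorbing the resulting error into $\epsilon$ and the tiny change in $I_f$ into the very generous constant $10^{15}$; this renders conditional expectations, restrictions, and minima over the relevant $\sigma$-algebras unproblematic. Throughout I maintain a collection $\mathcal{J}$ with its associated sub-$\sigma$-algebra $\mathcal{F}_\mathcal{J}$ --- a ``partial reveal'' --- together with $g := \Ex[f\mid \mathcal{F}_\mathcal{J}]$, and I track the potential $\Phi := \Ex[g(1-g)]$, which, since $f$ is Boolean, equals the remaining conditional variance $\Ex[\Var(f\mid\mathcal{F}_\mathcal{J})]$. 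Two elementary facts drive everything: (i) if $\mathcal{F}_{\mathcal{J}'}$ refines $\mathcal{F}_\mathcal{J}$ then, writing $g' := \Ex[f\mid\mathcal{F}_{\mathcal{J}'}]$, orthogonality of $g'-g$ to $L^2(\mathcal{F}_\mathcal{J})$ gives $\Phi' = \Phi - \Ex[(g'-g)^2]\le\Phi$; and (ii) $\|f-1_{[g\ge1/2]}\|_1 = \Ex[\min(g,1-g)] \le 2\Phi$. So it suffices to refine $\mathcal{J}$, starting from the trivial $\sigma$-algebra (where $\Phi = \Ex[f](1-\Ex[f])\le\tfrac14$), until $\Phi\le\epsilon/2$ while keeping $\int|J_\mathcal{J}(x)|\,dx$ controlled; then $h := 1_{[g\ge1/2]}$ does the job.

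The core is a refinement lemma of roughly this shape: \emph{if $\Phi>\epsilon/2$, then one can refine $\mathcal{F}_\mathcal{J}$ by adjoining finitely many further rules --- each of the form ``also put coordinate $i$ into the reveal whenever $x_i$ lands in a designated small event $E_i\subseteq X$'' --- so that $\Phi$ decreases by at least $\delta(\epsilon,I_f)>0$ at a cost in $\int|J_\mathcal{J}(x)|\,dx$ bounded in terms of $I_f$.} The hypothesis $I_f\le\lceil I_f\rceil$ is precisely what makes such a statement available: it confines the ``influence mass'' of $f$, relative to the current reveal, to a bounded collection of (coordinate, local event) pairs, so that a cheap reveal still captures a definite share of the conditional variance. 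To prove it I would run a Bourgain-type / martingale argument --- a general-product-space substitute for the Fourier-analytic inputs behind Friedgut's theorem and behind Bourgain's Theorem~\ref{thm:Bourgain}: if no cheap reveal lowers $\Phi$, then $g$ is essentially insensitive to every individual coordinate, and one then excludes the ``diffuse'' obstruction --- exactly the $\log\frac{1}{p}\sim\log n$ regime of Example~\ref{ex:simple} --- using the bound on $I_f$, forcing $g$ to be already close to $\{0,1\}$-valued, i.e.\ $\Phi$ small. The decisive point, and the reason \emph{pseudo}-juntas rather than honest juntas appear, is that the reveals must be allowed to be \emph{local}: which coordinates enter the reveal may depend on the values $x_i$, not merely on a fixed index set. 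The guiding example is $f=1_{[x\neq(0,\dots,0)]}$ --- one reveals ``the active part'' of each coordinate (its support), not the coordinate itself.

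Finally I would iterate: since $\Phi\le\tfrac14$ at the start and drops by at least $\delta$ each step until it is at most $\epsilon/2$, the process terminates after at most $1/(4\delta)$ steps, and tracking how the reveal cost accumulates across them --- a routine but delicate bookkeeping, optimized over the parameters of the refinement lemma --- bounds $\int|J_\mathcal{J}(x)|\,dx$ by $e^{10^{15}\epsilon^{-3}\lceil I_f\rceil^3}$; this is where the cubic exponent and the exponential are produced. The finite $\mathcal{J}$ assembled along the way then witnesses that $h=1_{[g\ge1/2]}$ is a pseudo-junta of the required size, and fact (ii) gives $\|f-h\|_1\le2\Phi\le\epsilon$. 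The principal obstacle is the refinement lemma --- both establishing its quantitative form with adequate constants (the final exponent is dictated by the trade-off between the per-step decrease $\delta$ and the per-step reveal cost) and, more fundamentally, identifying the right hypercontractivity-free mechanism that converts ``small total influence'' into ``either a cheap effective reveal exists or $f$ is already essentially $\mathcal{F}_\mathcal{J}$-measurable,'' with no symmetry of the underlying space to exploit.
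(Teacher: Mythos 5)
Your framework (the potential $\Phi=\Ex[g(1-g)]$, the Pythagorean identity under refinement, and the rounding bound $\|f-1_{[g\ge 1/2]}\|_1\le 2\Phi$) is sound, but the proof has a genuine gap: everything of substance is deferred to the ``refinement lemma,'' and that lemma \emph{is} the theorem. You assert that if no cheap reveal decreases $\Phi$ by $\delta(\epsilon,I_f)$, then $g$ is insensitive to every coordinate and the bound on $I_f$ then forces $\Phi$ to be small already. No mechanism is given for either implication, and the second one is exactly the local-to-global step that all the known machinery (Bourgain's Proposition~1, Friedgut's theorem, and the present paper) is built to supply. In particular, the innocuous-looking claim that small total influence yields a single bounded-cost reveal capturing a \emph{definite fraction} of the conditional variance is already essentially Theorem~\ref{thm:Bourgain}, and upgrading ``a definite fraction'' to ``all but $\epsilon$'' by iteration is precisely the open part of Friedgut's conjecture in the symmetric setting; it cannot be invoked as a routine Bourgain-type or martingale argument.

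Two further structural problems. First, your iteration needs the decrement $\delta$ and the per-step cost to be uniform over all stages, but after conditioning on $\mathcal{F}_{\mathcal{J}}$ the space is no longer a product space and there is no clean notion of ``influence of $f$ relative to the current reveal'' to which the hypothesis $I_f\le\lceil I_f\rceil$ could be re-applied; the paper avoids this entirely by a one-shot construction. Second, you restrict the reveal rules to single-coordinate events $E_i\subseteq X$, whereas the paper's construction crucially uses multi-coordinate rules $J_T:X^T\to\{0,1\}$ for sets $T$ with $|T|\le k$: the cancellation of cross terms $\int\tilde{G}_{S_1}\tilde{G}_{S_2}$ hinges on the fact that $J_{S_1\cap S_2}(y)=1$ forces $\int \tilde{G}_S\,dx_{S\setminus T}=0$, which has no analogue for purely single-coordinate reveals. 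For comparison, the paper's actual route is non-iterative: it takes the generalized Walsh expansion $f=\sum_S F_S$, discards the terms with $|S|>k$ via $I_f$ and the terms with small $\|F_S\|_\infty$ via Bourgain's argument, regularizes the surviving $F_S$ into functions $G_S$ with controlled $L_1$ mass, defines the $J_T$ by thresholding local averages of the resulting level sets, and then verifies directly that $\sum_{S\in\mathcal{S}}G_S$ is close to its conditional expectation on $\mathcal{F}_{\mathcal{J}}$. To salvage your approach you would have to state and prove the refinement lemma in full, which would amount to redoing this analysis.
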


The key point in Theorem~\ref{thm:main} is that the bound $e^{10^{15}  \epsilon^{-3} \lceil I_f \rceil ^3}$ has no dependence on $n$.
%

\begin{example}
Consider the function $f$ in Example~\ref{ex:simple}. Define $\mathcal{J}=\{J_S\}_{S \subseteq [n]}$ as follows. For every $S \subseteq [n]$, let $J_S:\{0,1\}^S \rightarrow \{0,1\}$ be defined as
$$J_S: x\mapsto
\left\{
\begin{array}{lcl}
1 &\qquad & x_S=(1,\ldots,1), \\
0 &\qquad & \mbox{otherwise}.
\end{array}
\right.
 $$
Note that $J_\mathcal{J}: x \mapsto \{i: x_i=1\}$, and so $\int |J_\mathcal{J}(x)| dx=  p n=1$. Furthermore $\mathcal{F}_\mathcal{J}$ is the original discrete $\sigma$-algebra on $\{0,1\}^n$ and hence $f$ is measurable with respect to it. So for this example, in Theorem~\ref{thm:main} one can take $h=f$.
\end{example}


Due to the connection to the  threshold phenomenon, the case of the $p$-biased distribution in Theorem~\ref{thm:main} is of particular interest. For this probability space, the proof of Theorem~\ref{thm:main} can be significantly simplified  as many steps in the proof are to address the difficulties that arise in dealing with general probability spaces. Furthermore this simpler proof provides some improvement over the
bound $e^{10^{15}  \epsilon^{-3} \lceil I_f \rceil ^3}$ of Theorem~\ref{thm:main}. Therefore we  first state and prove the main theorem in the case of the $p$-biased measure, and then in Section~\ref{sec:general} we present the more complicated proof of the general case.

\begin{theorem}[Main Theorem, the $p$-biased case]
\label{thm:mainpbiased}
Consider a function $f:(\{0,1\}^n,\mu_p^n) \rightarrow \{0,1\}$. For every $\epsilon>0$, there exists a
$e^{10^{10}  \epsilon^{-2} \lceil I_f \rceil ^2}$-pseudo-junta $h:(\{0,1\}^n,\mu_p^n) \rightarrow \{0,1\}$ such that $\| f - h\|_1 \le \epsilon$. \end{theorem}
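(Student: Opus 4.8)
The plan is to produce $h$ as a ``rounded conditional expectation'' of $f$ with respect to the sub-$\sigma$-algebra generated by a carefully constructed family $\mathcal{J}$ of short sets of relevant coordinates, and to build $\mathcal{J}$ greedily. For any family $\mathcal{J}$ put $g_{\mathcal{J}} := \Ex[f \mid \mathcal{F}_{\mathcal{J}}]$ and $h_{\mathcal{J}} := 1_{[g_{\mathcal{J}} \ge 1/2]}$; then $h_{\mathcal{J}}$ is $\mathcal{F}_{\mathcal{J}}$-measurable, and a one-line computation gives $\|f - h_{\mathcal{J}}\|_1 = \Ex\big[\min(g_{\mathcal{J}}, 1 - g_{\mathcal{J}})\big] \le 2\,\Ex[g_{\mathcal{J}}(1 - g_{\mathcal{J}})] = 2\,\Var(f \mid \mathcal{F}_{\mathcal{J}})$. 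So it suffices to exhibit a $\mathcal{J}$ with $\int |J_{\mathcal{J}}(x)|\,dx \le e^{10^{10}\epsilon^{-2}\lceil I_f\rceil^2}$ for which the conditional variance $\Var(f \mid \mathcal{F}_{\mathcal{J}})$ is at most $\epsilon/2$. The structural feature that makes a greedy construction terminate is that $\Var(f \mid \mathcal{F}_{\mathcal{J}})$ lies in $[0,1/4]$ and is nonincreasing under enlargement of $\mathcal{J}$.

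The heart of the matter is a refinement lemma of Bourgain--Friedgut type: whenever $\Var(f \mid \mathcal{F}_{\mathcal{J}}) > \epsilon/2$ one can enlarge $\mathcal{J}$ to some $\mathcal{J}'$ with $\Var(f \mid \mathcal{F}_{\mathcal{J}'}) \le \Var(f \mid \mathcal{F}_{\mathcal{J}}) - \Omega\!\big(\epsilon/\lceil I_f\rceil\big)$, at a controlled cost to $\int|J_{\mathcal{J}}|$. To prove it I would condition on a generic atom $(A,y) = \big(J_{\mathcal{J}}(x),\,x_{J_{\mathcal{J}}(x)}\big)$ on which the conditional law of $f$ is far from $\{0,1\}$-valued, restrict $f$ to the corresponding fiber to obtain an ``effective'' Boolean function of the remaining coordinates whose total influence is still $O(I_f)$, and apply the appropriate generalization of Bourgain's Theorem~\ref{thm:Bourgain} to it: in the $p$-biased regime such a function must admit a short set $S$ of coordinates together with a pattern $\sigma \in \{0,1\}^S$ whose fixing moves $\Pr[f=1]$ by a constant depending only on $\epsilon$, and because $f$ is not assumed monotone one needs the two-sided version of this statement in which $\sigma$ need not be $(1,\dots,1)$. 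Adjoining to $\mathcal{J}$ the functions $J_S$ produced this way across all atoms yields $\mathcal{J}'$; the law of total variance turns each such ``boost'' into a definite drop in the conditional variance, so after at most $O(\epsilon^{-1}\lceil I_f\rceil)$ refinements the variance falls below $\epsilon/2$.

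Composing the costs of these $O(\epsilon^{-1}\lceil I_f\rceil)$ refinement steps — the per-step growth of $\int|J_{\mathcal{J}}|$ together with the deterioration of the underlying hypercontractivity constants as the relevant bias parameter shrinks to order $\epsilon$ — is what produces, after careful bookkeeping, the stated bound $e^{10^{10}\epsilon^{-2}\lceil I_f\rceil^2}$, with $h := h_{\mathcal{J}}$. I expect the main obstacle to be the refinement lemma itself, and within it two points. First, Theorem~\ref{thm:Bourgain} is stated only for monotone increasing $f$, only for $p = o_{I_f}(1)$, and it returns only an ``upward'' booster $x_S = (1,\dots,1)$; removing monotonicity and the restriction on $p$ forces one to reprove the underlying estimate — essentially a marriage of the $p$-biased Bonami--Beckner hypercontractive inequality with a Margulis--Russo/Poincar\'e-type identity — in a form that yields a two-sided booster and with explicit control of how its constants degrade. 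Second, the fibers of $\mathcal{F}_{\mathcal{J}}$ are not honest subcubes — the event $J_{\mathcal{J}}(x) = A$ carves the subcube $\{x : x_A = y\}$ down to a measurable subset — so one must verify that the restriction of $f$ to such a fiber still behaves like a $p$-biased Boolean function, pin down the correct effective function on it, and bound its total influence by $O(I_f)$. This last difficulty is exactly the sort of thing that is far milder in the present $p$-biased setting than in the general product space treated in Section~\ref{sec:general}, which is why the theorem is stated and proved here first.
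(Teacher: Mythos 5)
Your route is genuinely different from the paper's, which never iterates: the paper constructs $\mathcal{J}$ in one shot from the $p$-biased Walsh expansion $f=\sum_S F_S$ (discarding $|S|>k$ via $I_f=2\sum_S|S|\|F_S\|_2^2$, discarding terms with small $\|F_S\|_\infty$ via Bourgain's $L_2$-concentration argument, and then setting $J_T(y)=1$ exactly when $\sum_{S\in\mathcal{S},\,S\supseteq T}\int 1_{[|F_S|\ge\epsilon_1]}\ge\delta\mu(y)$), and proves $\|g-\Ex[g|\mathcal{F}_{\mathcal{J}}]\|_2$ is small by an orthogonality computation with auxiliary $\sigma$-algebras $\mathcal{F}_{\mathcal{J}_S}$. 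Only your final rounding of $\Ex[f|\mathcal{F}_{\mathcal{J}}]$ at $1/2$ coincides with the paper.

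The proposal has a genuine gap, and it sits exactly in the refinement lemma, at the law-of-total-variance step. Suppose on some atom you find a set $S$ with $|S|\le C(\alpha)I_f$ and a pattern $\sigma$ with $|\Ex[f\mid x_S=\sigma]-\Ex[f]|\ge\delta(\alpha)$. Adjoining $J_S:=1_{[x_S=\sigma]}$ refines the $\sigma$-algebra by essentially the single event $E=\{x_S=\sigma\}$, and the resulting drop in conditional variance is $\Var(\Ex[f\mid 1_E])=\Pr[E]\,\Pr[E^c]\,(\Ex[f\mid E]-\Ex[f\mid E^c])^2\le\Pr[E]$. In the regime that matters, $\Pr[E]\approx p^{|S|}$, which for $p\sim 1/n$ and $|S|\sim I_f$ is $n^{-\Theta(I_f)}$: nowhere near the claimed $\Omega(\epsilon/\lceil I_f\rceil)$, and in fact $o(1)$ as $n\to\infty$, so the greedy process need not terminate in a number of steps bounded independently of $n$. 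A constant multiplicative boost on an exponentially rare event carries almost no variance. To make a boosting argument close, the boosters must jointly cover a constant fraction of the space; Friedgut achieves this for graph properties only because symmetry turns one booster into $n^{\Theta(|S|)}$ isomorphic copies, and removing that symmetry is precisely what the paper's one-shot Fourier construction is designed to circumvent. Separately, the input to your lemma --- a two-sided, non-monotone, all-$p$ booster theorem with explicit constants, applied to restrictions of $f$ to non-subcube fibers whose total influence you would still need to control (restriction to a low-probability fiber can inflate influences) --- is left unproved and is itself of difficulty comparable to the theorem; but the variance-decrement miscount is the structural failure.
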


\begin{remark}
\label{rem:otherProps}
In Theorem~\ref{thm:mainpbiased} the function  $h$ is a pseudo-junta, and so it is measurable with respect to a $\sigma$-algebra $\mathcal{F}_\mathcal{J}$ for a certain collection $\mathcal{J}$ of  functions $J_S:\{0,1\}^S  \rightarrow \{0,1\}$.  It follows from the proof of Theorem~\ref{thm:mainpbiased} that for $p \le 1/2$, it is  possible to  assume that $J_S$ are increasing. Similarly for $p \ge 1/2$, it is possible to assume that  $J_S$ are decreasing.
\end{remark}

\subsection{Increasing set properties\label{sec:increasing}}
Let $p =o(1)$ and $f:(\{0,1\}^n,\mu_p^n) \rightarrow \{0,1\}$ be an \emph{increasing} function with $\int f=\alpha>0$.
Bourgain's result (Theorem~\ref{thm:Bourgain}) roughly speaking  says that when $I_f$ is small, it is possible to assign the value $1$ to a small set of coordinates such that restricting to this assignment increases the expected value of $f$ in a non-negligible amount. The next corollary to Theorem~\ref{thm:mainpbiased} improves this result as it shows that it is possible to increase the expected value to arbitrarily close to $1$.
\begin{corollary}
\label{cor:monotone}
Let $f:(\{0,1\}^n,\mu_p^n) \rightarrow \{0,1\}$ be an increasing function with $\alpha=\int f$, and $\epsilon>0$ be a constant. If $p \le \frac{1}{2}$, then there exists a subset $S \subseteq [n]$ with $|S| \le e^{10^{12} \lceil I_f \rceil^2 \alpha^{-2} \epsilon^{-2}}$ such that
$$\Ex [f(x)| x_S=(1,\ldots,1)] \ge 1-\epsilon,$$
where $x$ is a random variables taking values in $\{0,1\}^n$ according to the $p$-biased distribution.
\end{corollary}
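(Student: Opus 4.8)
The plan is to derive the corollary from Theorem~\ref{thm:mainpbiased}. It is worth noting first that the statement would be immediate if $f^{-1}(1)$ contained a vector $z$ with bounded support $S:=\supp(z)$: since $f$ is increasing, $f(x)\ge f(z)=1$ for every $x$ with $x_S=(1,\dots,1)$, so $\Ex[f\mid x_S=(1,\dots,1)]=1$. In general no such small \emph{exact} witness need exist — this is why the $\epsilon$ is present — and the role of Theorem~\ref{thm:mainpbiased} is to supply a small set $S$ that is an \emph{approximate} witness, in the sense that $\mathbf 1_S\vee x\in f^{-1}(1)$ for all but an $\epsilon$-fraction of $p$-biased $x$, where $\mathbf 1_S$ denotes the $0/1$ indicator vector of $S$ and $\vee$ is coordinatewise maximum.

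Concretely I would set $\epsilon':=c\,\epsilon\alpha$ for a small absolute constant $c$ and apply Theorem~\ref{thm:mainpbiased} with parameter $\epsilon'$, obtaining a $k$-pseudo-junta $h$ with $\|f-h\|_1\le\epsilon'$ and $k=e^{10^{10}(\epsilon')^{-2}\lceil I_f\rceil^2}$, which for $c$ chosen appropriately is at most $e^{10^{12}\lceil I_f\rceil^2\alpha^{-2}\epsilon^{-2}}$. Since $p\le\frac12$, Remark~\ref{rem:otherProps} lets me take the witnessing collection $\mathcal J=\{J_S\}$ to consist of \emph{increasing} functions, with $\int|J_\mathcal J(x)|\,dx\le k$; this monotonicity is exactly what lets the pseudo-junta structure interact with the monotonicity of $f$. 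From $\sum_j\Pr[j\in J_\mathcal J(x)]\le k$, Markov's inequality, $\int f=\alpha$, and $\|f-h\|_1\le\epsilon'$, I would first record that with $\mu_p^n$-probability at least $\alpha(1-\tfrac\epsilon2)$ a random $x$ satisfies $f(x)=h(x)=1$ and $|J_\mathcal J(x)|\le m$ for a suitable $m=O(k/(\alpha\epsilon))$. I would also use the following structural fact, which follows from the increasingness of the $J_S$ and the definition of $J_\mathcal J$: if $h(x)=1$ and $y$ is obtained from $x$ by zeroing out all coordinates outside $J_\mathcal J(x)$, then $J_\mathcal J(y)=J_\mathcal J(x)$ and $h(y)=1$. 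Thus the $1$-mass of $h$ is carried by vectors supported inside their own relevance sets, and the restriction of $h$ to $\{x:x_S=(1,\dots,1)\}$ is governed by the relevance sets of such vectors.

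The last step is to pick the fixed set $S$ — of size at most $k$, extracted either as the relevance set $J_\mathcal J(x_0)$ of a well-chosen low-complexity point $x_0$ or, after an averaging argument, as a set capturing most of the $1$-mass of $h$ — and to show $\Ex[f\mid x_S=(1,\dots,1)]\ge1-\epsilon$. Here one cannot simply replace $f$ by $h$ after conditioning: the estimate $\|f-h\|_1\le\epsilon'$ is with respect to $\mu_p^n$ and is destroyed upon conditioning on $\{x_S=(1,\dots,1)\}$, an event of exponentially small probability $p^{|S|}$. The transfer must instead route through monotonicity: for $p$-biased $x$ the vector $\mathbf 1_S\vee x$ dominates $x$, and combining the structural fact above with the choice of $S$ (which was made so as to absorb the relevance sets of almost all of the $1$-mass of $h\approx f$) one argues that $\mathbf 1_S\vee x\in f^{-1}(1)$ for all but an $\epsilon$-fraction of $x$. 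Collecting the error terms $\epsilon/2$ from the Markov truncation and $O(\epsilon')$ from $\|f-h\|_1$, and choosing $c$ so their sum is below $\epsilon$, finishes the proof.

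The main obstacle is precisely this transfer from $h$ back to $f$ under the conditioning $x_S=(1,\dots,1)$. It is the reason the hypothesis $p\le\frac12$ is imposed — it makes the $J_S$ available as \emph{increasing} functions through Remark~\ref{rem:otherProps} — and it is why the monotonicity of $f$ is used in an essential way, not merely to invoke Theorem~\ref{thm:mainpbiased}. Pinning down a single set $S$ that simultaneously has size at most $k$ and dominates enough of the witness structure of $f$ (and not just of the approximating pseudo-junta $h$) is the technical heart of the argument; the remainder is bookkeeping with the constants coming from Theorem~\ref{thm:mainpbiased}.
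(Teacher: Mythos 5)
Your skeleton matches the paper's at the outset --- apply Theorem~\ref{thm:mainpbiased} with error parameter proportional to $\alpha\epsilon$, invoke Remark~\ref{rem:otherProps} (using $p\le\frac12$) to make the functions $J_T$ increasing, and use the monotonicity of $f$ to pass to the all-ones restriction --- and you correctly identify the crux: the bound $\|f-h\|_1\le\epsilon'$ is destroyed by conditioning on the exponentially unlikely event $\{x_S=(1,\dots,1)\}$. But your proposal does not supply the mechanism that resolves this crux. The sentence ``one argues that $\mathbf 1_S\vee x\in f^{-1}(1)$ for all but an $\epsilon$-fraction of $x$'' is exactly the statement to be proved, and the two ingredients you offer cannot produce it. Monotonicity alone ($\mathbf 1_S\vee x\ge x$) only yields $\Ex[f(\mathbf 1_S\vee x)]\ge\alpha$. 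Your zeroing-out ``structural fact'' is correct but is a statement about $h$: it gives $h(y)=1$ at the truncated point $y$, while $f$ and $h$ may disagree there, and $y$ lies in a set of measure on the order of $p^{|S|}$ about which the $L_1$ approximation says nothing --- precisely the failure mode you yourself diagnosed. Likewise, picking $S$ as ``a set capturing most of the $1$-mass of $h$'' need not be possible: the mass can be spread over exponentially many relevance sets, each carrying a negligible share.

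The two missing ideas are (i) selecting an atom by \emph{conditional density} rather than by mass, and (ii) the FKG inequality. The paper shows, by lower-bounding $\|f-h\|_1$ by $\tfrac34(1-\beta)\alpha$ where $\beta$ is the maximum over atoms $\{x_S=y_0 \text{ and } J_\mathcal{J}(x)=S\}$ with $|S|\le k$ of the conditional density of $f$ (of $f$, not of $h$, which is how pointwise control of $f-h$ is never needed), that some such atom has $\Ex[f\mid x_S=y_0,\ J_\mathcal{J}(x)=S]\ge1-\epsilon$. One must then strip off the conditioning on $J_\mathcal{J}(x)=S$. Because the $J_T$ are increasing, the event $\{J_\mathcal{J}(y_0,z)=S\}$ is a \emph{decreasing} event in $z\in\{0,1\}^{[n]\setminus S}$, while $z\mapsto f(y_0,z)$ is increasing; FKG therefore gives $\Ex[f(y_0,z)]\ge\Ex[f(y_0,z)\mid J_\mathcal{J}(y_0,z)=S]\ge1-\epsilon$, and monotonicity of $f$ upgrades $y_0$ to $(1,\dots,1)$. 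Without this correlation inequality (or an equivalent negative-correlation statement between the increasing function $f(y_0,\cdot)$ and the decreasing conditioning event), there is no route from the atom to the unconditioned restriction, so as written your argument has a genuine gap at its central step.
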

\begin{proof}
Theorem~\ref{thm:mainpbiased}  shows that there exists a collection $\mathcal{J}=\{J_S\}_{S \subseteq [n]}$  with $$\int |J_\mathcal{J}(x)| \le e^{10^{11} \lceil I_f \rceil^2 \alpha^{-2} \epsilon^{-2}},$$ and a function $h:\{0,1\}^n \rightarrow \{0,1\}$, measurable with respect to $\mathcal{F}_\mathcal{J}$, such that $\| f - h\|_1 \le \frac{\alpha \epsilon}{2}$. Set $k:= e^{10^{12} \lceil I_f \rceil^2 \alpha^{-2} \epsilon^{-2}}$, and
$$ \beta := \max_{\substack{S \subseteq [n]:|S| \le k \\ y \in \{0,1\}^S}} \Pr \left[f(x)=1 | \mbox{$x_{S}=y$ and $J_\mathcal{J}(x)=S$} \right],$$
where $x$ is a random variables taking values in $\{0,1\}^n$ according to the $p$-biased distribution. Since $f$ and $h$ are Boolean and $h$ is measurable with respect to $\mathcal{F}_\mathcal{J}$, we have
\begin{eqnarray*}
\|f-h\|_1 &=& \int 1_{[f \neq h]} \ge  \int 1_{[|J_\mathcal{J}| \le k]} 1_{[f = 0]} 1_{[h = 1]} + \int 1_{[|J_\mathcal{J}| \le k]} 1_{[f = 1]} 1_{[h = 0]}  \\
& \ge& (1-\beta) \int 1_{[|J_\mathcal{J}| \le k]}  1_{[h = 1]} + \int 1_{[|J_\mathcal{J}| \le k]} 1_{[f = 1]} 1_{[h = 0]} \ge (1-\beta) \int 1_{[|J_\mathcal{J}| \le k]} 1_{[f = 1]}
\\ &\ge& (1-\beta) \left(\int f - \int 1_{[|J_\mathcal{J}| > k]}\right) \ge (1-\beta)\left( \alpha -  k^{-1} \int |J_\mathcal{J}| \right)\ge \frac{3}{4}(1-\beta) \alpha,
\end{eqnarray*}
which together with $\| f - h\|_1 \le \frac{\alpha \epsilon}{2}$ implies that $\beta \ge 1- \epsilon$.  Hence there exists a subset $S \subseteq [n]$ with $|S| \le k$ and an element $y_0 \in \{0,1\}^S$ such that
\begin{equation}
\label{eq:denseAtom}
\Ex \left[f(x) | \mbox{$x_{S}=y_0$ and $J_\mathcal{J}(x)=S$} \right]\ge 1 - \epsilon,
\end{equation}
where $x$ is a random variables taking values in $\{0,1\}^n$ according to the $p$-biased distribution.
Define $h_1, h_2:\{0,1\}^{[n]\setminus S} \rightarrow \{0,1\}$ as $h_1:x \mapsto f(y_0,x)$, and $h_2:x \mapsto 1_{[J_\mathcal{J}(y_0,x)=S]}$. We can rewrite (\ref{eq:denseAtom}) as
\begin{equation}
\label{eq:denseAtomRewrite}
\Ex \left[h_1(z) | h_2(z)=1 \right]\ge 1 - \epsilon,
\end{equation}
where $z$ is a random variable taking values in $\{0,1\}^{[n]\setminus S}$ according to the $p$-biased distribution.
Since $f$ is increasing, $h_1$ is increasing.  Furthermore by Remark~\ref{rem:otherProps} we can assume that the functions $J_S$ are increasing. Thus  $h_2$ is decreasing, and then it follows from the classical FKG inequality (see~\cite{AlonSpencer1992}) that
$$\Ex [f(x)| x_S=(1,\ldots,1)] \ge \Ex \left[f(x) | x_{S}=y_0 \right] = \Ex[h_1(z)] \ge \Ex \left[h_1(z) | h_2(z)=1 \right] \ge 1 - \epsilon.$$
\end{proof}

The following example shows that Corollary~\ref{cor:monotone} fails to be valid when the function $f$ is not necessarily increasing.
\begin{example}
\label{ex:parity}
Set $p=n^{-1}$, and let $f:\{0,1\}^n \rightarrow \{0,1\}$ be defined as $f(x)=1$ if and only if $\sum_{i=1}^n x_i \equiv 0  (\mod 2)$. Similar to Example~\ref{ex:simple}, $I_f(1)=\ldots=I_f(n) \le 2 p$, and so $I_f \le 2$. Now consider a constant size  $A \subseteq [n]$ and any $y \in \{0,1\}^A$. Let $a \in \{0,1\}$ be such that $a \equiv \sum_{i \in A} y_i (\mod 2)$. Then we have
$$\frac{1}{e}-o(1) \le (1-p)^{n-|A|} \le \Ex[f(x)=a | x_A=y] \le 1 - (n-|A|)p (1-p)^{n-|A|-1} \le 1-\frac{1}{e} +o(1),$$
where $x$ is a random variables taking values in $\{0,1\}^n$ according to the $p$-biased distribution.
So for every sufficiently large $n$, not only $f$ is far from being determined by the coordinates in $A$, but also for every $y \in \{0,1\}^A$, both $\Ex[f(x)=0 | x_A=y]$ and $\Ex[f(x)=1 | x_A=y]$ are well separated from  $0$ and $1$ as they both belong to the interval $[\frac{1}{2e},1 - \frac{1}{2e}]$.
\end{example}

\section{Generalized Walsh expansion \label{sec:prel}}
In this short section we review some basic facts about the generalized Walsh expansions which are first defined by Hoeffding in~\cite{MR0026294} (See also~\cite{MR615434}).
Let $L_2(X^n)$ denote the set of functions $f:X^n \rightarrow \mathbb{C}$ that satisfy $\int |f(x)|^2 dx < \infty$. Consider a subset $S \subseteq [n]$, and a function $f \in L_2(X^n)$. Then $\int f(x) dx_S := \int f(x) \prod_{i \in S} dx_i$ denotes the integral with respect to the coordinates in $S$.

\begin{definition}
\label{def:Walsh}
The \emph{generalized Walsh expansion} of a  function $f \in L_2(X^n)$ is the unique expansion $f= \sum_{S \subseteq [n]} F_S$ that satisfies the following two properties:
\begin{itemize}
\item[{\bf (i)}] For every $S \subseteq [n]$, the function $F_S$ depends only on the coordinates in $S$, i.e. $F_S(x)=F_S(x_S)$;
\item[{\bf (ii)}] $\int F_S(x) dx_i \equiv 0$, for every $S \subseteq [n]$ and every $i \in S$.
\end{itemize}
\end{definition}

Note that it follows from  Definition~\ref{def:Walsh} (i) and (ii) that for every $T \subseteq [n]$, we have
$\int f dx_{[n] \setminus T} = \sum_{S \subseteq T} F_S$. Consequently for every $y \in X^n$,
\begin{equation}
\label{eq:Walsh}
F_S(y) = \sum_{T \subseteq S} (-1)^{|S \setminus T|} \int f(y_T,x_{[n] \setminus T}) dx_{[n] \setminus T},
\end{equation}
which shows that the generalized Walsh expansion is unique. It follows from (\ref{eq:Walsh}) that for every $S \subseteq [n]$, we have
\begin{equation}
\label{eq:FSinfinity}
\|F_S\|_\infty \le 2^{|S|} \|f\|_\infty.
\end{equation}

Consider two  subsets $S_1,S_2 \subseteq [n]$. If $S_1 \neq S_2$, then  Definition~\ref{def:Walsh} (i) and (ii) guarantee that $\int F_{S_1}  \overline{F_{S_2}} = 0$, or in other words the functions $\{F_S : S \subseteq [n]\}$ are pairwise orthogonal. As a consequence we have Parseval's identity:
\begin{equation}
\label{eq:Parseval}
\|f\|_2^2 = \sum_{S \subseteq [n]} \|F_S\|_2^2.
\end{equation}

The influences of variables have simple descriptions in terms of the generalized Walsh expansion. For a measurable function $f:X^n \rightarrow \{0,1\}$, and every $i \in [n]$, define
$$f^{(i)} = f - \int f dx_i = \sum_{S: i \in S} F_S.$$
It is easy to see that $I_f(i) = 2\| f^{(i)}\|_2^2,$ which by Parseval's identity implies
$$I_f(i) = 2 \sum_{S: i \in S} \|F_S\|_2^2.$$
Thus the total influence of $f$ is given by the formula
\begin{equation}
\label{eq:InfInWalsh}
I_f = 2\sum_{S \subseteq [n]} |S| \|F_S\|_2^2.
\end{equation}

\section{The $p$-biased case: Proof of Theorem~\ref{thm:mainpbiased}}
Without loss of generality we assume $p \le \frac{1}{2}$.  We start from the generalized Walsh expansion  $f = \sum_{S \subseteq [n]} F_S$. In the first two steps we prone this expansion  by removing some insignificant terms from it. We will arrive at a set $\mathcal{S} \subseteq \mathcal{P}([n])$ such that $\|f - \sum_{S \in \mathcal{S}} F_S\|_2$ is small and meanwhile the functions $F_S$ with $S \in \mathcal{S}$ satisfy certain properties. Then in the main step of the proof we define the collection $\mathcal{J}=\{J_S\}_{S \subseteq [n]}$, and show that $\left\|g - \Ex[g|\mathcal{F}_\mathcal{J}] \right\|_2$ is small for $g:=\sum_{S \in \mathcal{S}} F_S$. Once this is established, it is straightforward to finish the proof.  In the case  of the $p$-biased distribution, the functions $F_S$ have simple descriptions: There exists real constants $\{\widehat{f}(S)\}_{S \subseteq [n]}$ such that $F_S(x) = \widehat{f}(S) \prod_{i \in S} r(x_i)$ where $r(0)=-\sqrt{\frac{p}{1-p}}$ and $r(1)=\sqrt{\frac{1-p}{p}}$.  Note that $\widehat{f}(S) = \int f(x) \prod_{i \in S} r(x_i) dx$ which implies $|\widehat{f}(S)| \le p^{|S|/2}$. These properties of $F_S$ are crucial in this proof.

We abbreviate $\mu_p^n$ to $\mu$. We will define various constants which for the convenience of the reader are listed here:
$$
\begin{array}{lclclclclcl}
C &:=& \lceil I_f \rceil, &\qquad & \epsilon_0 &:=& 10^{-3} \epsilon, &\qquad& k &:=&  C \epsilon_0^{-1} = 10^{3} \epsilon^{-1} C, \\
\delta &:=& 2^{-10^2k^2}, && \epsilon_1 &:=&  3^{-10 k^2} \epsilon_0^{10 k}. &&  &&    \\
\end{array}
$$

Set $C$, $k$, $\epsilon_0$  as above, and let $\epsilon_1>0$ be a constant to be determined later. Let
$$\mathcal{S}:= \left\{S \subseteq [n]: |S| \le k, \|F_S\|_\infty > \epsilon_1 \right\}.$$
We wish to show that  the total contribution of $F_S$ with $S \not\in \mathcal{S}$ to $\|f\|_2^2$ is insignificant. First we deal with $S$ with $|S| >k$.

\subsection{Step I: High frequencies}
By (\ref{eq:Parseval}) and (\ref{eq:InfInWalsh}) we have
\begin{equation}
\label{eq:pbiased:cutFourier}
\sum_{S: |S| \ge k} \|F_S\|_2^2 \le \frac{I_f}{k} \le \epsilon_0.
\end{equation}
\subsection{Step II: Bourgain's argument}
Next we deal with $S \not\in \mathcal{S}$ with $|S| \le k$.
This step is not self-contained and is based on Bourgain's argument in~\cite{Bourgain99}. Following his proof (but substituting $k$ in place of $10C$), one can replace~\cite[Inequality (2.17)]{Bourgain99} with the following:
\begin{eqnarray*}
\int \sum_{S:|S| \le k} F_S^2  1_{[|F_S| \le \epsilon_1]} &\le& M^k \int \max_{S: |S| \le k} F_S^2  1_{[|F_S| \le \epsilon_1]} + 2 C 3^{k/2} \varepsilon^{2/3} + C^{1/2} \frac{3^k}{M^{1/2} \varepsilon}
\\ &\le& M^k \epsilon_1^2 + 2C 3^{k/2} \varepsilon^{2/3} + C^{1/2} \frac{3^k}{M^{1/2} \varepsilon},
\end{eqnarray*}
where $\epsilon_1, \varepsilon, M >0$  are arbitrary constants.
Then by setting $\varepsilon := 3^{-3k} \epsilon_0^{3}$, $M:= 3^{10 k} \epsilon_0^{-10}$, and $\epsilon_1 :=  3^{-10 k^2} \epsilon_0^{10 k}$, we obtain
\begin{equation}
\label{eq:pbiased:Bourgain}
\sum_{S: |S| \le k, S \not\in \mathcal{S}} \|F_S\|_2^2 \le \int \sum_{S:|S| \le k} F_S^2  1_{[|F_S| \le \epsilon_1]} \le \frac{\epsilon_0}{k}.
\end{equation}
\subsection{Step III: Main Step}
Next we define the collection of functions $\mathcal{J}=\{J_T\}_{T \subseteq [n]}$.
Set $\delta:=2^{-10^2k^2}$ and for every $T \subseteq [n]$ with $|T| \le k$, define $J_T: \{0,1\}^T \rightarrow \{0,1\}$ as
$$
J_T : y \mapsto
\left\{
\begin{array}{lcl}
1 &\qquad & \sum_{S \in \mathcal{S}, S \supseteq T} \int 1_{[|F_S| \ge \epsilon_1]} \ge \delta \mu(y),
\\ 0 & & \mbox{otherwise}.
\end{array}
\right.
$$
For $T \subseteq [n]$ with $|T| > k$, define $J_T \equiv 0$. Note that since $p \le \frac{1}{2}$, the functions $J_T$ are increasing.
The required bound on $\int |J_{\mathcal{J}}(x)|$ can be verified easily:
\begin{eqnarray*}
\int |J_\mathcal{J}| &\le& \sum_{T \subseteq [n], |T| \le k} |T| \int J_T \le k \sum_{T:|T| \le k} \int J_T \le \frac{k}{\delta}  \sum_{T:|T| \le k} \sum_{S \in \mathcal{S}, S \supseteq T} \int 1_{[|F_S| \ge \epsilon_1]}  \int \frac{1}{\mu(x_T)}  \\ &=&  \frac{k}{\delta} \sum_{T:|T| \le k} \sum_{S \in \mathcal{S}, S \supseteq T} 2^{|T|}\int 1_{[|F_S| \ge \epsilon_1]} \le \frac{k 2^{2k}}{\delta}    \sum_{S \in \mathcal{S}} \frac{\|F_S\|_2^2}{\epsilon_1^2}  \le \frac{k 2^{2k}}{\delta \epsilon_1^2} \le
2^{10^3k^2} \le e^{10^{10} \epsilon^{-2} \lceil I_f \rceil^2}.
\end{eqnarray*}

By  (\ref{eq:pbiased:cutFourier}) and (\ref{eq:pbiased:Bourgain}) for  $g :=  \sum_{S \in \mathcal{S}} F_S$, we have $\|f-g\|_2^2 \le  2\epsilon_0 $. Our goal is now to show that $\left\|g - \Ex[g|\mathcal{F}_\mathcal{J}]\right\|_2^2$ is small. Note that $\Ex[g|\mathcal{F}_\mathcal{J}] = \sum_{S \in \mathcal{S}} \Ex[F_S | \mathcal{F}_\mathcal{J}]$.  However since $\mathcal{F}_{\mathcal{J}}$ depends on all coordinates, it is difficult to analyze $\Ex[F_S | \mathcal{F}_{\mathcal{J}}]$ directly. To remedy this we define some auxiliary $\sigma$-algebras.
For every $S \subseteq [n]$, define the collection $\mathcal{J}_S := \{\tilde{J}_T\}_{T \subseteq [n]}$ in the following way. For every $T \subseteq [n]$, we set $\tilde{J}_T := J_T$ if  $T \subseteq S$, and  $\tilde{J}_T \equiv 0$ otherwise. Define $\tilde{g} := \sum_{S \in \mathcal{S}} \tilde{F}_S$, where
$\tilde{F}_S:=\Ex[F_S  | \mathcal{F}_{\mathcal{J}_S}]$.  For every $S \subseteq [n]$,
the function $\tilde{F}_S$ depends only on the coordinates in $S$ and furthermore
the $\sigma$-algebra $\mathcal{F}_{\mathcal{J}_S}$ is coarser than the $\sigma$-algebra
$\mathcal{F}_{\mathcal{J}}$. It follows from the latter that $\tilde{g}$ is measurable with respect to $\mathcal{F}_{\mathcal{J}}$ and hence
\begin{equation}
\label{eq:BoundConditional1}
\left\|g - \Ex[g | \mathcal{F}_{\mathcal{J}}] \right\|_2 \le \left\|g - \tilde{g}\right\|_2.
\end{equation}
So in order to bound $\left\|g - \Ex[g | \mathcal{F}_{\mathcal{J}}] \right\|_2$, it suffices to bound  $\|g - \tilde{g}\|_2$. Trivially for every $S \in \mathcal{S}$,
$$\int F_S \tilde{F}_S =\int \tilde{F}_S^2.$$
For  $S_1, S_2 \in \mathcal{S}$, since $\tilde{F}_{S_1}$ and $\tilde{F}_{S_2}$  depend respectively only on the coordinates in $S_1$ and $S_2$, if $S_1 \not\subseteq S_2$ and $S_2 \not\subseteq S_1$, then by Definition~\ref{def:Walsh} (ii),
$$\int F_{S_1}F_{S_2} = \int F_{S_1} \tilde{F}_{S_2} =\int \tilde{F}_{S_1} F_{S_2} = 0.$$
Similarly if $S_1 \cap S_2 = \emptyset$, then
$$
\int\tilde{F}_{S_1} \tilde{F}_{S_2} = \left(\int \tilde{F}_{S_1} \right)\left(\int \tilde{F}_{S_2} \right) =\left(\int F_{S_1}\right) \left(\int F_{S_2}\right)=0.
$$
Thus
\begin{eqnarray}
\nonumber
\|g - \tilde{g}\|_2^2 &=& \int \left(\sum_{S \in \mathcal{S}} F_S - \tilde{F}_S \right)^2 \le (k+1)\sum_{r=0}^k \int \left(\sum_{S \in \mathcal{S}, |S|=r} F_S - \tilde{F}_S \right)^2\\
&=& \nonumber  (k+1)\int \sum_{S \in \mathcal{S}} F_S^2 -  \tilde{F}_S^2 +  \sum_{\substack{S_1,S_2 \in \mathcal{S},  S_1 \cap S_2 \neq \emptyset \\ S_1 \neq S_2, |S_1|=|S_2|}}\tilde{F}_{S_1} \tilde{F}_{S_2} \\
&\le& 2k\int \sum_{S \in \mathcal{S}} F_S^2 - \tilde{F}_S^2 \label{eq:same1}
\\
&&+ 2k \sum_{\substack{S_1,S_2 \in \mathcal{S}, S_1 \neq S_2\\ S_1 \cap S_2 \neq \emptyset}}  \left|\int\tilde{F}_{S_1} \tilde{F}_{S_2}\right|. \label{eq:distinct1}
\end{eqnarray}

\noindent \emph{Bounding (\ref{eq:same1}):} Note that if $|F_S(x)| \ge \epsilon_1$, then $J_S(x)=1$ and hence $F_S(x)=\tilde{F}_S(x)$. Consequently by (\ref{eq:pbiased:Bourgain}), we have

\begin{eqnarray}
\nonumber
(\ref{eq:same1})&=&2k \int \sum_{S \in \mathcal{S}} |F_S -  \tilde{F}_S|^2  \le  2k \int \sum_{S \in \mathcal{S}} |F_S|^2 1_{[|F_S| \le \epsilon_1]}
\le  2k \frac{\epsilon_0}{k} \le 2 \epsilon_0.
\label{eq:boundSame1}
\end{eqnarray}

\noindent \emph{Bounding (\ref{eq:distinct1}):}
First let us prove a simple inequality.
Consider a set $S \subseteq [n]$, and note that for every $x \in \{0,1\}^S$,
\begin{equation*}
\mu(x) |F_S(x)|= (p(1-p))^{|S|/2}|\widehat{f}(S)|
\end{equation*}
is a constant that does not depend on $x$. This shows that $|\tilde{F}_S(x)| \le 2^{|S|} |F_S(x)|$ for every $x \in \{0,1\}^S$. It follows that for every  subset $T \subseteq S$, and every $y \in \{0,1\}^T$, we have
\begin{equation}
\label{eq:FSofY}
\left|\int \tilde{F}_{S}(y,x_{S \setminus T}) dx_{S \setminus T}\right| \le 2^{|S|}   \int |F_{S}(y,x_{S \setminus T})| dx_{S \setminus T} = 2^{|S|+|S \setminus T|} \frac{(p(1-p))^{|S|/2}|\widehat{f}(S)|}{\mu(y)} \le 2^{2|S|} \frac{p^{|S|}}{\mu(y)}.
\end{equation}
Moreover if $T \subsetneq S$ and $J_T(y)=1$, then
\begin{equation}
\label{eq:JOneZero}
\int \tilde{F}_{S}(y,x_{S \setminus T}) dx_{S \setminus T} = \int F_{S}(y,x_{S \setminus T}) dx_{S \setminus T} = 0.
\end{equation}
Consider \emph{distinct} $S_1,S_2 \in \mathcal{S}$ and $y \in \{0,1\}^T$ where $T:=S_1 \cap S_2$. If $J_T(y)=1$, then by (\ref{eq:JOneZero}),
$$\int  \tilde{F}_{S_1}(y,x_{[n] \setminus T}) \tilde{F}_{S_2}(y,x_{[n] \setminus T}) dx_{[n] \setminus T}=0.$$
Hence by (\ref{eq:FSofY}), we have
\begin{eqnarray*}
\left|\int  \tilde{F}_{S_1} \tilde{F}_{S_2}\right| &=& \left|\int  \tilde{F}_{S_1}(x_{T},x_{S_1 \setminus T})
\tilde{F}_{S_2}(x_{T},x_{S_2 \setminus T}) 1_{[J_T(x_T)=0]}\right| \\ &=& \left|\int \left(\int \tilde{F}_{S_1}(x_{T},x_{S_1 \setminus T})
 dx_{S_1 \setminus T}\right)\left(\int \tilde{F}_{S_2}(x_{T},x_{S_2 \setminus T}) dx_{S_2 \setminus T}\right) 1_{[J_T(x_T)=0]} dx_T \right|\\
&\le& 2^{2|S_1|+2|S_2|} p^{|S_1|+|S_2|} \int \frac{1_{[J_T(x_T)=0]}}{\mu(x_T)^2} dx_T.
\end{eqnarray*}
Since for $S \in \mathcal{S}$, we have $\int 1_{[|F_S| \ge \epsilon_1]}  \ge p^{|S|}$, it follows from the definition of $J_T$ that for every $T \subseteq [n]$,
$$\sum_{S \in \mathcal{S}, S \supseteq T} p^{|S|} \int \frac{1_{[J_T(x_T)=0]}}{\mu(x_T)^2} dx_T =
\sum_{S \in \mathcal{S}, S \supseteq T} p^{|S|} \sum_{x_T \in \{0,1\}^T} \frac{1_{[J_T(x_T)=0]}}{\mu(x_T)}  \le\sum_{x_T \in \{0,1\}^T} \delta \le  2^{|T|} \delta,$$
and thus
\begin{eqnarray*}
(\ref{eq:distinct1}) &\le&  2k \sum_{T \subseteq [n]}\sum_{\substack{S_1,S_2 \in \mathcal{S}\\ S_1 \neq S_2, S_1 \cap S_2 =T}}  2^{4k}p^{|S_1|+|S_2|} \int \frac{1_{[J_T(x_T)=0]}}{\mu(x_T)^2} \le  2^{5k} \sum_{T \subseteq [n]} \sum_{S \in \mathcal{S}, S \supseteq T} p^{|S|} 2^{|T|} \delta \\
&\le& 2^{6k}\delta \sum_{T \subseteq [n]}  \sum_{S \in \mathcal{S}, S \supseteq T} p^{|S|} \le 2^{7k} \delta \sum_{S \in \mathcal{S}} p^{|S|} \le  2^{7k} \delta \sum_{S \in \mathcal{S}} \frac{\|F_S\|_2^2}{\epsilon_1^2} \le  2^{7k} \delta \epsilon_1^{-2} \le \epsilon_0.
\end{eqnarray*}
Now we conclude from our bounds on (\ref{eq:same1}) and (\ref{eq:distinct1}) that $\|g - \tilde{g}\|_2^2 \le 3\epsilon_0$.
\subsection{Step IV: Finishing the proof}
In the previous steps we have shown that both $\|f -g\|_2^2$  and  $\|g - \tilde{g}\|_2^2$ are  small. It follows
$$\|f - \Ex[f|\mathcal{F}_\mathcal{J}]\|_2^2 \le \|f - \Ex[g|\mathcal{F}_\mathcal{J}]\|_2^2 \le
2\|f -g\|_2^2 + 2\|g- \Ex[g|\mathcal{F}_\mathcal{J}]\|_2^2 \le 4 \epsilon_0 + 2\|g-\tilde{g}\|_2^2 \le 10 \epsilon_0.$$
Define $h:\{0,1\}^n \rightarrow \{0,1\}$ as
$$h(x) :=
\left\{
\begin{array}{lcl}
0 & \qquad & \Ex[f|\mathcal{F}_\mathcal{J}]  \le \frac{1}{2}\\
1 & & \Ex[f|\mathcal{F}_\mathcal{J}]  > \frac{1}{2}.
\end{array}
\right.
$$
Note that $h$ is a Boolean function and it is measurable with respect to $\mathcal{F}_\mathcal{J}$. Since $f$ is a Boolean function, we have
\begin{eqnarray*}
\|f-h\|_1  &=& \int |f-h|^2\le \int 4|f-\Ex[f|\mathcal{F}_\mathcal{J}]|^2 \le 40 \epsilon_0  \le \epsilon.
\end{eqnarray*}

\section{The general case: Proof of Theorem~\ref{thm:main} \label{sec:general}}
First note that by Remark~\ref{rem:finiteness} we can assume that $X$ is a finite probability space. We start from  the generalized Walsh expansion $f = \sum_{S \subseteq [n]} F_S$. The first two steps are similar to the proof of Theorem~\ref{thm:mainpbiased} where we prone this expansion by removing some insignificant terms from it. We will arrive at a set $\mathcal{S} \subseteq \mathcal{P}([n])$ such that $\|f - \sum_{S \in \mathcal{S}} F_S\|_2$ is small and meanwhile the functions $F_S$ with $S \in \mathcal{S}$ satisfy certain properties. In general the functions $F_S$ are not necessarily as well-behaved as in the case of the $p$-biased distribution. So in the third step we approximate $\sum_{S \in \mathcal{S}} F_S$ with $g:=\sum_{S \in \mathcal{S}} G_S$ where for every $S \in \mathcal{S}$, the function $G_S$ (similar to $F_S$) satisfies the conditions in Definition~\ref{def:Walsh} (i) and (ii) and furthermore it has some other desirable properties. In the fourth and fifth steps we define the collection $\mathcal{J}=\{J_S\}_{S \subseteq [n]}$, and show that $\left\|g - \Ex[g|\mathcal{F}_\mathcal{J}] \right\|_2$ is small. Then in the last step which is similar to the last step of the proof of Theorem~\ref{thm:mainpbiased}, we conclude the theorem.

Set $C:=\lceil I_f \rceil$. In the proof we will define various constants which for the convenience of the reader are listed here:
$$
\begin{array}{lclclcl}
\epsilon_0 &=& 10^{-3} \epsilon, &\qquad & k &=&  C \epsilon_0^{-1} = 10^{3} \epsilon^{-1} C, \\
\delta_0 &=& 2^{-2k}, & & \epsilon_1 &=&  3^{-10 k^2} \epsilon_0^{10 k} \ge e^{-10^{8} C^2 \epsilon^{-2}}, \\
\delta &=& 2^{-10^3 k^2} \epsilon_1^{10} \ge e^{-10^{10} C^2 \epsilon^{-2}}, & & \epsilon_2 &=& \delta^{10k} \ge e^{-10^{14} C^3 \epsilon^{-3}}.
\end{array}
$$

\subsection{Step I: High frequencies}

Set $\epsilon_0:= 10^{-3} \epsilon$ and $k:=C \epsilon_0^{-1}$ and notice that by (\ref{eq:Parseval}) and (\ref{eq:InfInWalsh}) we have
\begin{equation}
\label{eq:cutFourier}
\sum_{S: |S| \ge k} \|F_S\|_2^2 \le \frac{I_f}{k} \le \epsilon_0.
\end{equation}

\subsection{Step II: Bourgain's argument}
As we mentioned in the proof of Theorem~\ref{thm:mainpbiased}, following Bourgain's proof in \cite{Bourgain99}   (but substituting $k$ in place of $10C$), one can replace~\cite[Inequality (2.17)]{Bourgain99} with the following:
\begin{eqnarray*}
\int \sum_{S:|S| \le k} F_S^2  1_{[|F_S| \le \epsilon_1]} &\le& M^k \int \max_{S: |S| \le k} F_S^2  1_{[|F_S| \le \epsilon_1]} + 2 C 3^{k/2} \varepsilon^{2/3} + C^{1/2} \frac{3^k}{M^{1/2} \varepsilon}
\\ &\le& M^k \epsilon_1^2 + 2C 3^{k/2} \varepsilon^{2/3} + C^{1/2} \frac{3^k}{M^{1/2} \varepsilon},
\end{eqnarray*}
where $\epsilon_1, \varepsilon, M >0$  are arbitrary constants. Then by setting $\varepsilon := 3^{-3k} \epsilon_0^{3}$, $M:= 3^{10 k} \epsilon_0^{-10}$, and $\epsilon_1 :=  3^{-10 k^2} \epsilon_0^{10 k}$, we obtain
\begin{equation}
\label{eq:Bourgain}
\int \sum_{S:|S| \le k} F_S^2  1_{[|F_S| \le \epsilon_1]} \le \epsilon_0^2 k^{-1}.
\end{equation}
Define
$$\mathcal{S} := \left\{S \subseteq [n]: |S| \le k, \int F_S^2 1_{[|F_S| \le \epsilon_1]} \le \epsilon_0 k^{-1} \int F_S^2 \right\},$$
and notice that by (\ref{eq:Bourgain}) we have
\begin{equation}
\label{eq:proningSmall}
\sum_{S: |S| \le k, S \not\in \mathcal{S}} \int F_S^2   \le  \sum_{S: |S| \le k} \frac{k}{\epsilon_0} \int F_S^2 1_{[|F_S| \le \epsilon_1]}
\le \epsilon_0.
\end{equation}
\subsection{Step III: Modifying the generalized Walsh functions}
Now we will focus our attention to $S \in \mathcal{S}$ as (\ref{eq:cutFourier}) and (\ref{eq:proningSmall}) show that the generalized Walsh functions $F_S$ with $S \not\in \mathcal{S}$ have a negligible contribution to the $L_2$ norm of $f$.

Define $\delta:=2^{-10^3 k^2} \epsilon_1^{10}$, and for every $S \in \mathcal{S}$, let $\psi_S:X^S \rightarrow \{0,1\}$  be defined as
$$\psi_S(y) := \left\{
\begin{array}{lcl}
1 & \qquad &\max_{T \subseteq S} \delta^{-2|S \setminus T|}\int 1_{[|F_S(y_T,x_{S \setminus T})| > \epsilon_1]} dx_{S \setminus T} \ge 1,\\
0 & & \mbox{otherwise.}
\end{array}
\right.
$$
The next lemma provides some information about the generalized Walsh expansion of $F_S \psi_S$.
\begin{lemma}
\label{lem:smooth}
Consider an $S \in \mathcal{S}$. Let $F_S \psi_S = \sum_{T \subseteq S} H_T$ be the generalized Walsh expansion of $F_S \psi_S$. Then $\|H_T\|_\infty \le \delta$, for every $T \subsetneq S$.
\end{lemma}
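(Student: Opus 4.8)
The plan is to bound $\|H_T\|_\infty$ for $T \subsetneq S$ by using the explicit formula \eqref{eq:Walsh} applied to the function $F_S\psi_S$, namely
$$
H_T(y) = \sum_{U \subseteq T} (-1)^{|T \setminus U|} \int (F_S \psi_S)(y_U, x_{S \setminus U}) \, dx_{S \setminus U}.
$$
Since $T \subsetneq S$, in every term of this sum we integrate over at least the coordinates in $S \setminus T$, which is nonempty. So it suffices to show that for every $U \subseteq T \subsetneq S$ and every $y$, the quantity $\left|\int (F_S \psi_S)(y_U, x_{S\setminus U}) \, dx_{S \setminus U}\right|$ is at most $\delta / 2^{|T|}$ (or some comparable bound absorbing the $2^{|T|} \le 2^k$ factor from the number of terms; the constant $\delta = 2^{-10^3 k^2}\epsilon_1^{10}$ has plenty of room). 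The heart of the matter is therefore a pointwise estimate on partial integrals of $F_S \psi_S$ over a set of coordinates that includes $S \setminus T$.

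**The key estimate.** Fix $U \subsetneq S$ (we may take $U = T$ after enlarging, or just work with a generic proper subset over which we do \emph{not} integrate) and $y \in X^U$. We want to bound $\int |F_S \psi_S|(y, x_{S \setminus U})\, dx_{S \setminus U}$. The point of the cutoff $\psi_S$ is this: on the event $\psi_S = 0$ the integrand vanishes, and on the event $\psi_S = 1$ we are in a region where $|F_S| > \epsilon_1$ has non-negligible conditional probability. Concretely, $\psi_S(z) = 1$ forces $\max_{W \subseteq S} \delta^{-2|S \setminus W|} \int 1_{[|F_S(z_W, x_{S\setminus W})| > \epsilon_1]}\, dx_{S\setminus W} \ge 1$; taking $W = U$ (using $U \subseteq S$) this gives, for every $z$ with $\psi_S(z)=1$ and $z_U = y$, that $\int 1_{[|F_S(y, x_{S \setminus U})| > \epsilon_1]}\, dx_{S \setminus U} \ge \delta^{2|S \setminus U|}$. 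Meanwhile $F_S$, satisfying Definition~\ref{def:Walsh}(ii), has mean zero in each coordinate of $S$, so $\int F_S(y, x_{S\setminus U})\, dx_{S\setminus U} = 0$ whenever $U \subsetneq S$. The plan is to combine this mean-zero property with the measure-lower-bound on $\{|F_S| > \epsilon_1\}$ and the crude sup bound $\|F_S\|_\infty \le 2^{|S|}\|f\|_\infty \le 2^k$ from \eqref{eq:FSinfinity}: split $\int F_S(1-\psi_S) = \int F_S 1_{[\psi_S=0]}$ into the part where $|F_S| \le \epsilon_1$ (contributing at most $\epsilon_1$) and the part where $|F_S| > \epsilon_1$ and $\psi_S = 0$; then $\int F_S\psi_S = -\int F_S 1_{[\psi_S = 0]}$, and we argue the "large, uncut" region has tiny measure. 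The lower bound $\int 1_{[|F_S| > \epsilon_1]} \ge \delta^{2|S\setminus U|}$ guarantees that wherever the fiber has a large value of $F_S$, the cutoff $\psi_S$ has already been triggered (it equals $1$ there), so $1_{[|F_S| > \epsilon_1]} 1_{[\psi_S = 0]}$ has measure controlled by $\delta$ to a positive power; multiplying by $\|F_S\|_\infty \le 2^k$ still leaves something $\le \delta^{2}$ say, since $\delta$ carries the factor $\epsilon_1^{10}$ and powers of $2^{-10^3 k^2}$ to spare.

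**Assembling.** So the chain is: $|H_T(y)| \le 2^{|T|} \max_{U \subseteq T}\left|\int (F_S\psi_S)(y_U, x_{S\setminus U})\, dx_{S\setminus U}\right| \le 2^k\left(\epsilon_1 + 2^k \cdot (\text{measure of bad set})\right)$, and the bad-set measure is at most (a power of) $\delta$ because the definition of $\psi_S$ was rigged precisely so that $\psi_S = 0$ forces every fiberwise superlevel set $\{|F_S|>\epsilon_1\}$ to be $\delta$-small. With $\delta = 2^{-10^3 k^2}\epsilon_1^{10}$ one checks $2^k(\epsilon_1 + 2^k\delta^{\,c}) \le \delta$ for the relevant exponent $c$, giving $\|H_T\|_\infty \le \delta$ as claimed.

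**Main obstacle.** The delicate point is the interplay between \emph{which} subset $W$ we use in the definition of $\psi_S$ and \emph{which} partial integral $\int \cdots dx_{S\setminus U}$ appears in the Walsh formula for $H_T$ — we need the superlevel-set lower bound to be available on exactly the fibers that show up, and to track the exponent $\delta^{-2|S\setminus W|}$ correctly against the number of integrated coordinates. Getting the bookkeeping right so that the loss from $\|F_S\|_\infty \le 2^k$ and from the $2^{|T|}$ terms is comfortably swallowed by the $\epsilon_1^{10}$ and $2^{-10^3 k^2}$ built into $\delta$ is where the care is needed; conceptually, though, it is just "mean zero plus a measure lower bound on the large set forces the cut-off part to be small."
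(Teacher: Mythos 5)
Your reduction via (\ref{eq:Walsh}) to bounding $\left|\int (F_S\psi_S)(y,x_{S\setminus U})\,dx_{S\setminus U}\right|$ for $U\subsetneq S$ is the right first step, and your observation that $\{|F_S|>\epsilon_1\}\subseteq\{\psi_S=1\}$ (take the subset equal to $S$ itself in the definition of $\psi_S$) is correct. But the central estimate fails quantitatively. Writing $\int F_S\psi_S\,dx_{S\setminus U}=-\int F_S 1_{[\psi_S=0]}\,dx_{S\setminus U}$ and splitting according to whether $|F_S|\le\epsilon_1$, the first piece is bounded only by $\epsilon_1$, and nothing in your argument improves on that: $F_S$ can genuinely be of size comparable to $\epsilon_1$ on almost all of the set $\{\psi_S=0\}$ in the fiber. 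So your method yields at best $\|H_T\|_\infty\le 2^k\epsilon_1$, which is far weaker than the claim, since $\delta=2^{-10^3k^2}\epsilon_1^{10}\ll\epsilon_1$; your closing assertion that one checks $2^k(\epsilon_1+2^k\delta^{\,c})\le\delta$ is false. (A bound of order $\epsilon_1$ would also not suffice downstream: the proof of Lemma~\ref{lem:Properties} needs $\|F_S-G_S\|_\infty\lesssim\delta\ll\epsilon_1^2$ to absorb factors of $\epsilon_1^{-2}$.) There is also a logical slip: $\psi_S(z)=1$ asserts that the \emph{maximum} over $W\subseteq S$ of $\delta^{-2|S\setminus W|}\int 1_{[|F_S(z_W,\cdot)|>\epsilon_1]}$ is at least $1$; it does not let you ``take $W=U$'' and conclude that inequality for the particular subset $U$.

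The missing idea is a dichotomy on the fiber over $y\in X^U$, working with $\{\psi_S=1\}$ rather than its complement. Either some $W\subseteq U$ satisfies $\delta^{-2|S\setminus W|}\int 1_{[|F_S(y_W,x_{S\setminus W})|>\epsilon_1]}\,dx_{S\setminus W}\ge1$; this condition depends only on $y$, so then $\psi_S\equiv1$ on the entire fiber and $\int F_S\psi_S\,dx_{S\setminus U}=\int F_S\,dx_{S\setminus U}=0$ exactly, by Definition~\ref{def:Walsh}(ii) and $U\subsetneq S$. Or else every $W\subseteq U$ fails the test, so within the fiber the event $\psi_S=1$ is forced by some $W\not\subseteq U$; union-bounding over those $W$ and integrating, each such term equals $\delta^{-2|S\setminus W|}\int 1_{[|F_S|>\epsilon_1]}\,dx_{S\setminus(W\cap U)}\le\delta^{-2|S\setminus W|}\delta^{2|S\setminus(W\cap U)|}\le\delta^2$ because $|S\setminus(W\cap U)|\ge|S\setminus W|+1$ when $W\not\subseteq U$. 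Hence $\int\psi_S(y,x_{S\setminus U})\,dx_{S\setminus U}\le 2^{2k}\delta^2$ and, by (\ref{eq:FSinfinity}), $\left|\int F_S\psi_S\,dx_{S\setminus U}\right|\le 2^{3k}\delta^2$. In either case the fiber integral is at most $2^{3k}\delta^2$, and summing over $U\subseteq T$ gives $\|H_T\|_\infty\le 2^{4k}\delta^2\le\delta$. The gain of $\delta^2$ from the exponent mismatch is exactly what your bookkeeping never extracts.
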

\begin{proof}
We claim that for every $R \subsetneq S$,
\begin{equation}
\label{eq:claim}
\left\|\int F_S \psi_S dx_{S \setminus R} \right\|_\infty \le  2^{3k} \delta^2.
\end{equation}
Suppose to the contrary that there exists  $y \in X^R$, such that $\left|\int F_S(y,x_{S \setminus R}) \psi_S(y,x_{S \setminus R}) dx_{S \setminus R} \right| >  2^{3k} \delta^2.$
Then for every $T \subseteq R$, we have
\begin{equation}
\label{eq:contra}
\delta^{-2|S \setminus T|}\int 1_{[|F_S(y_T,x_{S \setminus T})| > \epsilon_1]} dx_{S \setminus T} < 1,
\end{equation}
as otherwise $\psi_S(z)=1$, for every $z \in  X^S$ with $z_R=y$, and in this case, since $R \neq S$, we would have
$$\int F_S(y,x_{S \setminus R}) \psi_S(y,x_{S \setminus R}) dx_{S \setminus R}= \int F_S(y,x_{S \setminus R}) dx_{S \setminus R} =0.$$
Now by (\ref{eq:FSinfinity}), (\ref{eq:contra}) and the definition of $\psi_S$, we  have
\begin{eqnarray*}
\left|\int F_S(y,x_{S \setminus R}) \psi_S(y,x_{S \setminus R}) dx_{S \setminus R} \right| &\le& 2^k \int \psi_S(y,x_{S \setminus R}) dx_{S \setminus R} \\ &\le&
2^k \int \sum_{T \subseteq S,  T \not\subseteq R} \delta^{-2|S \setminus T|}\left( \int 1_{[|F_S| > \epsilon_1]} dx_{S \setminus T}\right) dx_{S \setminus R} \\
& = &  2^{k} \sum_{T \subseteq S, T \not\subseteq R}  \delta^{-2|S \setminus T|}\int 1_{[|F_S| > \epsilon_1]} dx_{S \setminus (T \cap R)} \\ &\le& 2^{2k} \delta^2 \sum_{T \subseteq R} \delta^{-2|S \setminus T|} \int 1_{[|F_S| > \epsilon_1]} dx_{S \setminus T} \le 2^{3k} \delta^2,
\end{eqnarray*}
which establishes our claim (\ref{eq:claim}). Now by (\ref{eq:Walsh})  and (\ref{eq:claim}), for every $T \subsetneq S$,  we have
\begin{eqnarray*}
\|H_T\|_\infty &=& \left\| \sum_{R \subseteq T} (-1)^{|T \setminus R|} \int F_S \psi_S dx_{[n] \setminus R} \right\|_\infty \le \sum_{R \subseteq T} \left\| \int F_S \psi_S dx_{[n] \setminus R} \right\|_\infty \le 2^{4k} \delta^2 \le \delta.
\end{eqnarray*}
\end{proof}

For every $S \in \mathcal{S}$, define $G_S := H_S$, where  $F_S \psi_S = \sum_{T \subseteq S} H_T$ is the generalized Walsh expansion of $F_S \psi_S$.  The function $G_S$ satisfies the conditions in Definition~\ref{def:Walsh} (i) and (ii). Lemma~\ref{lem:Properties} below lists some other properties of $G_S$. Part (a) shows that for $S \in \mathcal{S}$, similar to $F_S$, most of the $L_2$ weight of $G_S$ is concentrated on $\{x: |F_S(x)| \ge \epsilon_1\}$. Part (b) shows that $G_S$ is a good approximation of $F_S$ in the $L_2$ norm. Parts (c) and (d) in particular show that it is possible to bound the sum of the $L_1$ norms of $G_S$, for $S \in \mathcal{S}$. The functions $F_S$ do not necessarily satisfy this latter property, and it is for this reason that we replace them with the functions $G_S$. For every $S \in \mathcal{S}$, define $a_S:X^S \rightarrow \mathbb{R}$ as
$$a_S: y \mapsto  2^{3k} \delta^{-2k} \sum_{T \subseteq S} \int 1_{[|F_S(y_{S\setminus T},x_T)| > \epsilon_1]} dx_{T}.$$
\begin{lemma}
\label{lem:Properties}
For every $S \in \mathcal{S}$, we have
\begin{itemize}
\item[{\bf (a):}] $\int G_S^2 1_{[|F_S| \le \epsilon_1]} \le  2 \epsilon_0 k^{-1} \|F_S\|_2^2$.
\item[{\bf (b):}] $\|F_S - G_S\|_2^2 \le 10 \epsilon_0 k^{-1} \|F_S\|_2^2$.
\item[{\bf (c):}] For every $y \in X^S$, we have $|G_S(y)| \le  a_S(y)$.
\item[{\bf (d):}] $\sum_{S \in \mathcal{S}} \int a_S dx_{[n]} \le \delta^{-3k}$.
\end{itemize}
\end{lemma}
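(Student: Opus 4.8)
The plan is to establish the four parts more or less independently, building on Lemma~\ref{lem:smooth} and the basic estimates \eqref{eq:FSinfinity}, \eqref{eq:Walsh} for the generalized Walsh coefficients. Throughout, fix $S \in \mathcal{S}$ and write $F_S \psi_S = \sum_{T \subseteq S} H_T$, so $G_S = H_S$ and $F_S \psi_S - G_S = \sum_{T \subsetneq S} H_T$, a sum of $2^{|S|} \le 2^k$ functions each of $L_\infty$ norm at most $\delta$ by Lemma~\ref{lem:smooth}; hence $\|F_S\psi_S - G_S\|_\infty \le 2^k\delta$ and in particular $\|F_S\psi_S - G_S\|_2^2 \le 2^{2k}\delta^2$, which will be negligible against $\epsilon_0 k^{-1}\|F_S\|_2^2$ once we note that $S \in \mathcal{S}$ forces $\|F_S\|_2 \ge \|F_S\|_\infty^{-1}\|F_S\|_2^2$ is not too small — more precisely $\|F_S\|_2^2 \ge \epsilon_1^2 \int 1_{[|F_S| > \epsilon_1]}$, and the defining inequality of $\mathcal{S}$ gives $\int F_S^2 1_{[|F_S| > \epsilon_1]} \ge (1 - \epsilon_0 k^{-1})\|F_S\|_2^2$, so $\|F_S\|_2$ is comparable to the ``large part'' of $F_S$.

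For part (b), write $F_S - G_S = F_S(1-\psi_S) + (F_S\psi_S - G_S)$. The second term is controlled by $2^{2k}\delta^2$ as above. For the first term, on the set $\{\psi_S = 0\}$ we have in particular (taking $T = S$ in the definition of $\psi_S$) that $\int 1_{[|F_S| > \epsilon_1]}dx_{\emptyset}$-type conditions fail, but more usefully $\psi_S(x) = 0$ implies $|F_S(x)| \le \epsilon_1$ when we take $T = S$; thus $F_S^2(1-\psi_S) \le F_S^2 1_{[|F_S| \le \epsilon_1]}$ pointwise, and integrating and using the $\mathcal{S}$ condition gives $\int F_S^2(1-\psi_S) \le \epsilon_0 k^{-1}\|F_S\|_2^2$. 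Combining, and absorbing the tiny $2^{2k}\delta^2$ term using $\delta = 2^{-10^3k^2}\epsilon_1^{10}$ and the lower bound on $\|F_S\|_2^2$, yields $\|F_S - G_S\|_2^2 \le 10\epsilon_0 k^{-1}\|F_S\|_2^2$. Part (a) is essentially the same computation: $G_S^2 1_{[|F_S|\le\epsilon_1]} \le 2(F_S\psi_S)^2 1_{[|F_S|\le\epsilon_1]} + 2(F_S\psi_S - G_S)^2 \le 2F_S^2 1_{[|F_S|\le\epsilon_1]} + 2\cdot 2^{2k}\delta^2$, integrate and use the $\mathcal{S}$ condition again.

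For part (c), the key identity is \eqref{eq:Walsh}: $G_S(y) = \sum_{T \subseteq S}(-1)^{|S\setminus T|}\int F_S(y_T, x_{S\setminus T})\psi_S(y_T,x_{S\setminus T})\,dx_{S\setminus T}$. On the support of $\psi_S$ we have, by definition, $\max_{T\subseteq S}\delta^{-2|S\setminus T|}\int 1_{[|F_S(y_T,x_{S\setminus T})|>\epsilon_1]}dx_{S\setminus T} \ge 1$; but for the pointwise bound one needs to bound each summand $|\int F_S \psi_S dx_{S\setminus T}| \le \|F_S\|_\infty \int \psi_S dx_{S\setminus T}$, and then dominate $\psi_S$ by the sum $\sum_{R\subseteq S}\delta^{-2|S\setminus R|}1_{[|F_S|>\epsilon_1]\text{-integrated}}$ as already done inside the proof of Lemma~\ref{lem:smooth}. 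Tracking the powers of $2^k$ and $\delta^{-2k}$ that accumulate gives exactly $|G_S(y)| \le 2^{3k}\delta^{-2k}\sum_{T\subseteq S}\int 1_{[|F_S(y_{S\setminus T},x_T)|>\epsilon_1]}dx_T = a_S(y)$. Finally, for part (d), integrate $a_S$ over $X^{[n]}$: $\int a_S = 2^{3k}\delta^{-2k}\sum_{T\subseteq S}\int 1_{[|F_S|>\epsilon_1]} \le 2^{3k}\delta^{-2k} 2^k \cdot \epsilon_1^{-2}\|F_S\|_2^2$, and then $\sum_{S\in\mathcal{S}}\|F_S\|_2^2 \le \|f\|_2^2 \le 1$ by Parseval \eqref{eq:Parseval}; the prefactor $2^{4k}\delta^{-2k}\epsilon_1^{-2}$ is bounded by $\delta^{-3k}$ by the choice of constants. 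I expect part (c) to be the main obstacle, since it is the one place where one must extract a \emph{pointwise} bound from the somewhat delicate definition of $\psi_S$ and carefully account for how the $\delta^{-2|S\setminus T|}$ weights combine across the $2^{|S|}$ terms of the Walsh inversion formula; the other three parts are routine once the $\mathcal{S}$-membership inequality and Lemma~\ref{lem:smooth} are in hand.
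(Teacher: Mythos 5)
Your parts (c) and (d) are correct and coincide with the paper's argument. The genuine gap is in parts (a) and (b): both of your estimates terminate in an \emph{additive} error of size $2^{2k}\delta^2$, obtained by integrating the $L_\infty$ bound $\|F_S\psi_S-G_S\|_\infty\le 2^k\delta$ over the whole space, and you propose to absorb it into $\epsilon_0 k^{-1}\|F_S\|_2^2$ using ``the lower bound on $\|F_S\|_2^2$.'' No such lower bound exists. Membership in $\mathcal{S}$ only says that most of the $L_2$ mass of $F_S$ sits on $\{|F_S|>\epsilon_1\}$; that set can have arbitrarily small measure, so $\|F_S\|_2^2$ can be far below $\delta^2$ (already for the $p$-biased measure with $p$ tiny: $F_{\{i\}}=\widehat{f}(\{i\})r(x_i)$ belongs to $\mathcal{S}$ as soon as $|\widehat{f}(\{i\})|\sqrt{(1-p)/p}>\epsilon_1$ and $p\le\epsilon_0 k^{-1}$, while $\|F_{\{i\}}\|_2^2=\widehat{f}(\{i\})^2$ is unbounded below). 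The multiplicative form of (a) and (b) is not cosmetic: in Steps V and VI these bounds are summed over all of $\mathcal{S}$, a family of size possibly $n^{\Theta(k)}$ for which only $\sum_S\|F_S\|_2^2\le 1$ is available; an additive $2^{2k}\delta^2$ per set would contribute a quantity growing with $n$, destroying the $n$-independence that the theorem is about.

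The repair needs two ingredients absent from your sketch. For (b), split according to $\{|F_S|\ge\epsilon_1\}$ and its complement: on the first set $\psi_S=1$, so $F_S-G_S=\sum_{T\subsetneq S}H_T$ there and the $L_\infty$ bound applies, but it is now integrated only over a set of measure at most $\epsilon_1^{-2}\|F_S\|_2^2$ by Chebyshev, which makes the error multiplicative; on the complement use $(F_S-G_S)^2\le 2F_S^2+2G_S^2$ together with the $\mathcal{S}$-condition and part (a). For (a), the pointwise route cannot work at all, since $(F_S\psi_S-G_S)^2 1_{[|F_S|\le\epsilon_1]}$ lives on a set of possibly full measure; the paper instead exploits orthogonality, namely $\int G_S^2\le\int(F_S\psi_S)^2\le\int F_S^2$ because $G_S=H_S$ is one orthogonal component of the Walsh expansion of $F_S\psi_S$, combined with the lower bound $\int G_S^2 1_{[|F_S|>\epsilon_1]}\ge\int(|F_S|-2^k\delta)^2 1_{[|F_S|>\epsilon_1]}\ge(1-2\epsilon_0 k^{-1})\int F_S^2$ (valid since $\psi_S=1$ on $\{|F_S|>\epsilon_1\}$, and with the error again controlled through $\int 1_{[|F_S|>\epsilon_1]}\le\epsilon_1^{-2}\|F_S\|_2^2$); subtracting the two yields (a). Your observation that $\psi_S(y)=0$ forces $|F_S(y)|\le\epsilon_1$ is correct and is used in this argument, but it does not by itself close the gap.
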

\begin{proof}
Consider $S \in \mathcal{S}$, and let $F_S \psi_S = \sum_{T \subseteq S} H_T$ be the generalized Walsh expansion of $F_S \psi_S$.
By Lemma~\ref{lem:smooth} we have
\begin{equation}
\label{eq:FSGSdiff}
\|F_S - G_S\|_\infty \le \sum_{T \subsetneq S} \|H_T\|_\infty \le 2^k \delta.
\end{equation}
Hence by (\ref{eq:FSinfinity}), (\ref{eq:Parseval}) and the assumptions that $S \in \mathcal{S}$ and $\delta=2^{-10^3 k^2} \epsilon_1^{10}$, we have
\begin{eqnarray*}
\int F_S^2 &\ge& \int F_S^2 \psi_S^2 \ge \int G_S^2 \ge \int G_S^2 1_{[|F_S| > \epsilon_1]} \ge \int (|F_S| - 2^{k} \delta)^2 1_{[|F_S| > \epsilon_1]}
\\  &\ge& \int F_S^2 1_{[|F_S| > \epsilon_1]} - 2^{k+1} \delta  \int |F_S| 1_{[|F_S| > \epsilon_1]}
\ge \int F_S^2 1_{[|F_S| > \epsilon_1]} - 2^{3k} \delta  \int  1_{[|F_S| > \epsilon_1]}
\\ &\ge&  (1- \epsilon_0 k^{-1}) \int F_S^2 - 2^{3k} \delta \epsilon_1^{-2}\int F_S^2 \ge (1-2 \epsilon_0 k^{-1}) \int F_S^2.
\end{eqnarray*}
Then
$$\int G_S^2 1_{[|F_S| \le \epsilon_1]} = \int G_S^2 - \int G_S^2 1_{[|F_S| > \epsilon_1]} \le \int F_S^2 - (1-2 \epsilon_0 k^{-1}) \int F_S^2
\le 2 \epsilon_0 k^{-1} \|F_S\|_2^2,$$
which verifies Part (a). In order to prove Part (b) note that by Part (a), (\ref{eq:FSGSdiff}) and the assumptions that $S \in \mathcal{S}$ and $\delta=2^{-10^3 k^2} \epsilon_1^{10}$, we have
\begin{eqnarray*}
\|F_S - G_S\|_2^2 &=& \int |F_S -G_S|^2 1_{[|F_S| \ge \epsilon_1]} +  \int |F_S -G_S|^2 1_{[|F_S| < \epsilon_1]}
\\ &=& 2^{2k}\delta^2 \int 1_{[|F_S| \ge \epsilon_1]} +  2 \int (F_S^2 +G_S^2) 1_{[|F_S| < \epsilon_1]}  \\
& \le & 2^{2k}\delta^2 \epsilon_1^{-2}  \int F_S^2 + 2 \int F_S^2  1_{[|F_S| < \epsilon_1]} + 2 \int G_S^2  1_{[|F_S| < \epsilon_1]}
\\ & \le& 2^{2k}\delta^2 \epsilon_1^{-2} \|F_S\|_2^2 + 2\epsilon_0 k^{-1} \|F_S\|_2^2 + 4\epsilon_0 k^{-1} \|F_S\|_2^2 \le 10 \epsilon_0 k^{-1} \|F_S\|_2^2.
\end{eqnarray*}
To prove Part (c) notice that by (\ref{eq:Walsh}), (\ref{eq:FSinfinity}), and the definition of $\psi_S$,
\begin{eqnarray*}
|G_S| &=& \left| \sum_{R \subseteq S} (-1)^{|S \setminus R|} \int F_S \psi_S dx_{S \setminus R} \right| \le \sum_{R \subseteq S} 2^k \int \psi_S dx_{S \setminus R}  \\ & \le & 2^k \sum_{R \subseteq S} \int \delta^{-2k}\sum_{T \subseteq S} \left( \int 1_{[|F_S| > \epsilon_1]} dx_{S \setminus T}\right) dx_{S \setminus R}
\\ &\le& 2^k \delta^{-2k} \sum_{R \subseteq S} \sum_{T \subseteq S} \int 1_{[|F_S| > \epsilon_1]} dx_{S \setminus (T \cap R)} \le  2^{3k} \delta^{-2k} \sum_{T \subseteq S} \int 1_{[|F_S| > \epsilon_1]} dx_{T}.
\end{eqnarray*}
It remains to prove Part (d). We have
\begin{eqnarray*}
\sum_{S \in \mathcal{S}} \int a_S dx_{[n]} &=&  2^{3k} \delta^{-2k} \sum_{S \in \mathcal{S}} \sum_{T \subseteq S} \int 1_{[|F_S| > \epsilon_1]} dx_T dx_{[n]}\le 2^{4k} \delta^{-2k}\sum_{S \in \mathcal{S}} \int 1_{[|F_S| > \epsilon_1]} \\ &=&  2^{4k} \delta^{-2k} \epsilon_1^{-2} \sum_{S \in \mathcal{S}} \|F_S\|_2^2 \le 2^{4k} \delta^{-2k} \epsilon_1^{-2} \le \delta^{-3k}.
\end{eqnarray*}
\end{proof}

\subsection{Step IV: The sigma algebra\label{sec:sigmaAlg}}
In Steps 1-3 of the proof we approximated $f$ in the $L_2$ norm with $g:=\sum_{S \in \mathcal{S}} G_S$, where the functions $G_S$ satisfy certain properties. Next we will define the collection $\mathcal{J} = \{J_S\}_{S \subseteq [n]}$ so that  $\left\|g - \Ex[g | \mathcal{F}_{\mathcal{J}}] \right\|_2$ is small, and $\int |J_\mathcal{J}|$ is bounded by a constant that does not depend on $n$. In order to define $\mathcal{J}$, we need to introduce the auxiliary functions $\xi_T: X^T \rightarrow \{0,1\}$, for every $T \subseteq [n]$ with $|T| \le k$. Set $\epsilon_2 := \delta^{10k}$ and for every $T \subseteq [n]$ with $|T| \le k$, define $\xi_T: X^T \rightarrow \{0,1\}$ as
$$
\xi_T : y \mapsto
\left\{
\begin{array}{lcl}
1 &\qquad & \sum_{R \subseteq T} \sum_{S \in \mathcal{S}: S \supseteq T} \int a_S(y_R,x_{S \setminus R}) dx_{S\setminus R} > \epsilon_2,
\\ 0 & & \mbox{otherwise}.
\end{array}
\right.
$$
Then set $\delta_0:=2^{-2k}$ and for every $T \subseteq [n]$ with $|T| \le k$, define $J_T: X^T \rightarrow \{0,1\}$ as
$$
J_T : y \mapsto
\left\{
\begin{array}{lcl}
1 &\qquad & \max_{R \subseteq T}   \delta_0^{-2|T \setminus R|} \int \xi_T(y_R,x_{T \setminus R}) dx_{T \setminus R}\ge 1,
\\ 0 & & \mbox{otherwise}.
\end{array}
\right.
$$
For $T \subseteq [n]$ with $|T| > k$, define $J_T \equiv 0$. In the sequel it will be useful to bear in mind that for $S \in \mathcal{S}$,
\begin{equation}
\label{eq:basicAboutXi}
1_{[|F_S| > \epsilon_1]} \le \xi_S \le J_S,
\end{equation}
and in general for every $T \subseteq [n]$ with $|T| \le k$, it holds that
\begin{equation}
\label{eq:basicAboutXiII}
\xi_T \le J_T.
\end{equation}

In order to be able to bound $\left\|g - \Ex[g | \mathcal{F}_{\mathcal{J}}] \right\|_2$ and $\int |J_\mathcal{J}|$ we need to prove a few lemmas.
\begin{lemma}
\label{lem:dichot}
Consider a set $T \subseteq [n]$ with $|T| \le k$. For every $R \subseteq T$ and every $y \in X^{R}$, either $\int J_T(y,x_{T \setminus R}) dx_{T \setminus R} \le  \delta_0$, or $J_T(z) = 1$ for every $z \in X^T$ with $z_R=y$.
\end{lemma}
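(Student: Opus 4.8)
\textbf{Proof plan for Lemma~\ref{lem:dichot}.}
The plan is to set up the desired dichotomy as a direct consequence of the definition of $J_T$, which itself is phrased in terms of a maximum over subsets $R \subseteq T$ of an expression involving $\xi_T$. First I would fix $T$ with $|T| \le k$, a subset $R \subseteq T$, and a point $y \in X^R$, and consider the quantity $\int J_T(y,x_{T \setminus R})\, dx_{T \setminus R}$. The key observation is that $J_T$ is $\{0,1\}$-valued, so this integral is exactly the $\mu^{T \setminus R}$-measure of the set of extensions $w \in X^{T \setminus R}$ for which $J_T(y,w)=1$; denote this set by $E_y \subseteq X^{T \setminus R}$. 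I want to show that either $\mu^{T \setminus R}(E_y) \le \delta_0$, or $E_y$ is everything, i.e. $J_T(z)=1$ for all $z$ with $z_R=y$.

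The mechanism is the following. Suppose $\mu^{T \setminus R}(E_y) > \delta_0$. For each $w \in E_y$ we have $J_T(y,w)=1$, which by definition of $J_T$ means there exists some $R' \subseteq T$ with $\delta_0^{-2|T \setminus R'|}\int \xi_T((y,w)_{R'}, x_{T \setminus R'})\, dx_{T \setminus R'} \ge 1$. The plan is to argue that one can push the ``witnessing'' subset to be contained in $R$: intuitively, if $J_T=1$ on a set of $w$'s of measure exceeding $\delta_0$, then integrating the relevant $\xi_T$-average in the $w$-direction loses at most a factor $\delta_0$, and the $\delta_0^{-2}$ weights in the definition of $J_T$ more than compensate, forcing the inequality $\delta_0^{-2|T \setminus R|}\int \xi_T(y, x_{T \setminus R})\, dx_{T \setminus R} \ge 1$ to hold as well. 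But this last inequality is $R$-indexed and depends only on $y$, not on $w$; hence it makes $J_T(z)=1$ for \emph{every} $z$ with $z_R=y$, which is the second alternative. So the proof is essentially a monotonicity/averaging argument using that $\xi_T$ is nonnegative (indeed $\{0,1\}$-valued) together with the geometric weights $\delta_0^{-2|T\setminus R|}$ built into the definition of $J_T$; this is the same kind of argument used to prove Lemma~\ref{lem:smooth}, where the $\delta^{-2|S\setminus T|}$ weights in the definition of $\psi_S$ played the analogous role.

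Concretely, I would proceed as follows. Assume $\int J_T(y,x_{T\setminus R})\,dx_{T\setminus R} > \delta_0$. By definition, for each $w$ in the measure-$>\delta_0$ set $E_y$ there is $R' = R'(w) \subseteq T$ with $\delta_0^{-2|T\setminus R'|}\int \xi_T((y,w)_{R'},x_{T\setminus R'})\,dx_{T\setminus R'} \ge 1$. Decompose $E_y$ according to the value of $R'$; by pigeonhole there is a fixed $R'$ whose slice has measure $> 2^{-|T|}\delta_0 \ge \delta_0^{2}$ (using $|T|\le k$ and the choice $\delta_0 = 2^{-2k}$, so $2^{-|T|} \ge 2^{-k} \ge \delta_0^{1/2}$; I would be careful to get the exact exponent bookkeeping right here). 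On that slice $\xi_T((y,w)_{R'}, \cdot)$ has average $\ge \delta_0^{2|T\setminus R'|}$. Now split $R' = (R' \cap R) \sqcup (R' \setminus R)$; integrating out the coordinates of $w$ outside $R'$ trivially only uses $w_{R'\cap(T\setminus R)}$, and integrating further over the measure-$>\delta_0^2$ slice in those coordinates loses at most that measure factor. Since $\xi_T$ is increasing is \emph{not} assumed, the clean way is: because $\xi_T \ge 0$ and $\xi_T \le 1$, averaging $\xi_T$ over a subset of measure $m$ of the $x_{T\setminus R}$-variables, then over all of them, relates to the $R$-indexed average up to a factor of $m$; combined with the gain $\delta_0^{-2|T\setminus R|} = \delta_0^{-2|T\setminus R'|}\cdot \delta_0^{-2|R'\setminus R|}$ and $|R'\setminus R| \ge 1$ unless $R' \subseteq R$, one concludes $\delta_0^{-2|T\setminus R|}\int\xi_T(y,x_{T\setminus R})\,dx_{T\setminus R} \ge 1$, so $J_T(z)=1$ whenever $z_R = y$. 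The case $R' \subseteq R$ is immediate since then the witnessing inequality already depends only on $y_{R'}$, hence on $y$, giving the second alternative directly.

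\textbf{Main obstacle.} The delicate point is the exponent bookkeeping in the pigeonhole/averaging step: one must verify that the measure loss from restricting $w$ to a ``good'' slice, possibly $2^{|T|}$ such slices, is always dominated by the geometric factor $\delta_0^{-2}$ picked up per coordinate moved from inside $R'$ to inside $R$ — and that the strict inequality $>\delta_0$ (rather than $\ge$) propagates correctly so that the conclusion $J_T(z)=1$ is genuinely forced for \emph{all} $z$ with $z_R=y$. The choices $\delta_0 = 2^{-2k}$ and $|T|\le k$ are exactly what make this go through, and writing the inequalities in the right order (pigeonhole first, then the per-coordinate $\delta_0^{-2}$ gains, then collapsing $R'$ onto $R$) is the only real content; the rest is definition-chasing.
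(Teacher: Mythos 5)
Your proposal is correct, and the underlying mechanism is the same as the paper's: a witness $A\subseteq R$ in the definition of $J_T$ depends only on $y$ and therefore forces $J_T\equiv 1$ on the entire fiber $\{z\in X^T: z_R=y\}$, while a witness $B\not\subseteq R$ is controlled by a Markov-type averaging bound relative to $A=B\cap R$, the per-coordinate factor $\delta_0^{-2}$ absorbing the loss because $|T\setminus (B\cap R)|\ge |T\setminus B|+1$. The difference is purely in the logical direction and in how the witnesses are aggregated. The paper assumes the second alternative fails, deduces that every $A\subseteq R$ satisfies $\int \xi_T(y_A,x_{T\setminus A})\,dx_{T\setminus A}<\delta_0^{2|T\setminus A|}$, and then bounds $\int J_T(y,x_{T\setminus R})\,dx_{T\setminus R}$ by a union bound over the at most $2^k$ witnesses $B\not\subseteq R$, each contributing at most $\delta_0^{2|T\setminus A|}\delta_0^{-2|T\setminus B|}\le\delta_0^2$ by Markov's inequality, for a total of $2^k\delta_0^2\le\delta_0$. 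You run the contrapositive: assuming the integral exceeds $\delta_0$, you pigeonhole to a single witness $R'$ whose slice has measure greater than $2^{-|T|}\delta_0\ge\delta_0^2$ and transfer it to $A=R'\cap R$. Your bookkeeping does check out: since the witnessing condition is a cylinder set in the coordinates $R'\setminus R$, the transferred integral satisfies $\int\xi_T(y_A,x_{T\setminus A})\,dx_{T\setminus A}\ge \delta_0^{2|T\setminus R'|}\cdot 2^{-|T|}\delta_0\ge\delta_0^{2|T\setminus R'|+2}\ge\delta_0^{2|T\setminus A|}$, using $2^{-|T|}\ge\delta_0$ and $|T\setminus A|\ge|T\setminus R'|+1$ (with the case $R'\subseteq R$ handled directly, as you note). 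The paper's direction is marginally cleaner — the union bound makes the pigeonhole and the witness transfer unnecessary — but both arguments rely on exactly the same inequality $2^k\le\delta_0^{-1/2}$ and yield the same constants.
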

\begin{proof}
Suppose that $J_T(z) = 0$, for at least one $z \in X^T$ with $z_R = y$.  Then by the definition of $J_T$, for every $A \subseteq R$, we have
\begin{equation}
\label{eq:inLemDichot}
\int \xi_T(y_A,x_{T \setminus A}) dx_{T \setminus A} <  \delta_0^{2|T \setminus A|}.
\end{equation}
Hence for every $B \subseteq T$ with $ B \not\subseteq R$, setting $A:=B \cap R$, we have
\begin{eqnarray*}
\int  1_{\left[\int \xi_T(y_A,x_{T \setminus A}) dx_{T \setminus B}\ge \delta_0^{2|T \setminus B|} \right]} dx_{T \setminus R}  &=&
\int  \left( \int 1_{\left[\int \xi_T(y_A,x_{T \setminus A})  dx_{T \setminus B}\ge \delta_0^{2|T \setminus B|} \right]} dx_{T \setminus B} \right) dx_{T \setminus R}
\\ & \le & \int  \left(  \int \xi_T(y_A,x_{T \setminus A})  \delta_0^{-2|T \setminus B|} dx_{T \setminus B} \right) dx_{T \setminus R}
\\ &=& \delta_0^{-2|T \setminus B|} \int  \xi_T(y_A,x_{T \setminus A})   dx_{T \setminus A}  \\
& \le & \delta_0^{-2|T \setminus B|}  \delta_0^{2|T \setminus A|} \le \delta_0^2.
\end{eqnarray*}
Then using (\ref{eq:inLemDichot}) we conclude that
\begin{eqnarray*}
\int J_T(y_R,x_{T \setminus R}) dx_{T \setminus R}  &\le& \int \sum_{A \subseteq R} 1_{\left[\int \xi_T dx_{T \setminus A}\ge \delta_0^{2|T \setminus A|} \right]} dx_{T \setminus R}+ \int \sum_{\substack{B:B \subseteq T,\\ B \not\subseteq R}} 1_{\left[\int \xi_T dx_{T \setminus B}\ge \delta_0^{2|T \setminus B|} \right]} dx_{T \setminus R} \\
&=& \int \sum_{B:B \subseteq T, B \not\subseteq R} 1_{\left[\int \xi_T dx_{T \setminus B}\ge \delta_0^{2|T \setminus B|} \right]} dx_{T \setminus R} = \sum_{B:B \subseteq T, B \not\subseteq R}\delta_0^2   \le   2^k \delta_0^2 \le \delta_0.
\end{eqnarray*}
\end{proof}

The following lemma shows that for a random $x \in X^T$, the expected value of $\sum_{T: |T| \in k} J_T(x)$ is bounded from above by the constant $\epsilon_2^{-2}$.
\begin{lemma}
\label{lem:boundExp}
We have
$$\sum_{T: |T| \le k} \int J_T  \le \epsilon_2^{-2}.$$
\end{lemma}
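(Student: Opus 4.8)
The plan is to strip off the two layers of definitions one at a time, each via a Markov-type inequality, and then invoke Lemma~\ref{lem:Properties}(d). First I would bound a single $\int J_T$ by $\int \xi_T$. From the definition of $J_T$ we get the pointwise estimate
$$J_T \le \sum_{R \subseteq T} 1_{\left[\int \xi_T(y_R,x_{T \setminus R})\, dx_{T \setminus R} \ge \delta_0^{2|T \setminus R|}\right]},$$
and since the quantity $\int \xi_T(y_R,x_{T \setminus R})\, dx_{T \setminus R}$ depends only on $y_R$, integrating over $X^T$ and applying Markov's inequality gives $\int 1_{\left[\int \xi_T(y_R,x_{T\setminus R})dx_{T\setminus R} \ge \delta_0^{2|T\setminus R|}\right]} \le \delta_0^{-2|T\setminus R|}\int \xi_T$. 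Summing over the $2^{|T|}\le 2^k$ subsets $R \subseteq T$ and using $\delta_0 \le 1$, $|T| \le k$, this yields $\int J_T \le 2^k \delta_0^{-2k}\int \xi_T$, hence $\sum_{T:|T|\le k}\int J_T \le 2^k\delta_0^{-2k}\sum_{T:|T|\le k}\int \xi_T$.

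Second I would bound $\sum_{T:|T|\le k}\int\xi_T$ by $\sum_{S\in\mathcal{S}}\int a_S\, dx_{[n]}$. Since $\xi_T \in \{0,1\}$ and $\xi_T = 1$ forces the defining sum to exceed $\epsilon_2$, and since $a_S \ge 0$, we have the pointwise bound $\xi_T \le \epsilon_2^{-1}\sum_{R\subseteq T}\sum_{S\in\mathcal{S}:\,S\supseteq T}\int a_S(y_R,x_{S\setminus R})\, dx_{S\setminus R}$. For $R\subseteq T\subseteq S$ the inner integral depends only on the coordinates in $R$, so integrating it over $y\in X^T$ simply recovers $\int a_S\, dx_{[n]}$. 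Summing over all $T$ with $|T|\le k$, the triple sum becomes $\sum_{S\in\mathcal{S}}\#\{(T,R):R\subseteq T\subseteq S\}$, and for fixed $S\in\mathcal{S}$ this count is $\sum_{T\subseteq S}2^{|T|}=3^{|S|}\le 3^k$ (note that $|S|\le k$ makes the constraint $|T|\le k$ automatic for these $T$). Thus $\sum_{T:|T|\le k}\int\xi_T \le 3^k\epsilon_2^{-1}\sum_{S\in\mathcal{S}}\int a_S\, dx_{[n]}\le 3^k\epsilon_2^{-1}\delta^{-3k}$ by Lemma~\ref{lem:Properties}(d).

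Combining, $\sum_{T:|T|\le k}\int J_T \le 2^k 3^k\,\delta_0^{-2k}\,\epsilon_2^{-1}\,\delta^{-3k}$, and it remains to check that this is at most $\epsilon_2^{-2}=\delta^{-20k}$. Substituting $\delta_0 = 2^{-2k}$ (so $\delta_0^{-2k}=2^{4k^2}$) and $\epsilon_2^{-1}=\delta^{-10k}$ reduces the claim to $6^k 2^{4k^2}\le \delta^{-7k}$, which is clear since $\delta \le 2^{-10^3 k^2}$ makes the right-hand side at least $2^{7\cdot10^3 k^3}$ while the left-hand side is at most $2^{7k^2}$. I do not anticipate a genuine difficulty: both reductions are routine Markov estimates, and the constants $\delta_0,\epsilon_2$ were chosen precisely so the final arithmetic closes. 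The only place needing care is bookkeeping — making sure each integral is taken over the correct block of coordinates so that the identities $\int_{X^T}(\,\cdot\,)=\int_{X^R}(\,\cdot\,)$ and the counting of the pairs $(T,R)$ are carried out correctly.
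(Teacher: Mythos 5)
Your proposal is correct and follows essentially the same route as the paper: a Markov-type estimate to reduce $\int J_T$ to $2^k\delta_0^{-2k}\int\xi_T$, a second Markov-type estimate (with the $(T,R)$ count, where you get $3^k$ in place of the paper's $2^{2k}$) to reduce $\sum_T\int\xi_T$ to $\epsilon_2^{-1}\sum_{S\in\mathcal{S}}\int a_S$, and then Lemma~\ref{lem:Properties}(d) plus the arithmetic with $\delta_0,\delta,\epsilon_2$. No gaps.
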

\begin{proof}
Note that
\begin{eqnarray}
\nonumber
\sum_{T: |T| \le k} \int J_T  dx_T &\le& \sum_{T:|T| \le k} \int  \delta_0^{-2k} \sum_{R \subseteq T} \int \xi_T dx_{T \setminus R}  dx_T
\le \delta_0^{-2k} \sum_{T:|T| \le k} \sum_{R \subseteq T} \int \xi_T dx_T
\\ &\le& 2^k \delta_0^{-2k}  \sum_{T: |T| \le k} \int \xi_T dx_T.
\label{eq:tildaRemove}
\end{eqnarray}
Furthermore by the definition of $\xi_T$ and Part (d) of Lemma~\ref{lem:Properties},
\begin{eqnarray}
\nonumber
\sum_{T: |T| \le k} \int  \xi_T dx_{[n]} &\le& \epsilon_2^{-1}   \sum_{T: |T| \le k} \int \left( \sum_{R \subseteq T}  \sum_{S \in \mathcal{S}, S \supseteq T} \int a_S dx_{[n]\setminus R}\right)  dx_{[n]} \\ \nonumber &\le& 2^k \epsilon_2^{-1} \sum_{T: |T| \le k} \sum_{S \in \mathcal{S}, S \supseteq T}
\int a_S dx_{[n]}  \le  2^{2k}\epsilon_2^{-1} \int \sum_{S \in \mathcal{S}} a_S dx_{[n]}
\\&\le & 2^{2k} \epsilon_2^{-1} \delta^{-3k}.  \label{eq:boundTilde}
\end{eqnarray}
From (\ref{eq:tildaRemove}), (\ref{eq:boundTilde}), and the assumptions  $\epsilon_2 = \delta^{10k}$, we conclude that
$$\int \sum_{T: |T| \le k} J_T dx_T \le 2^{3k} \delta_0^{-2k} \delta^{-3k} \epsilon_2^{-1} \le \epsilon_2^{-2}.$$
\end{proof}

It follows immediately from Lemma~\ref{lem:boundExp} that
\begin{equation}
\label{eq:boundMathcalJ}
\int |J_\mathcal{J}| \le \int \sum_{T \subseteq [n], |T| \le k} |T| J_T \le k \sum_{T: |T| \le k} \int J_T  \le k\epsilon_2^{-2}.
\end{equation}

\subsection{Step V: Bounding the error}
In this section we show that $\left\|g - \Ex[g | \mathcal{F}_{\mathcal{J}}] \right\|_2$ is small.
By linearity of expectation $\Ex[g | \mathcal{F}_{\mathcal{J}}] = \sum_{S \in \mathcal{S}} \Ex[G_S | \mathcal{F}_{\mathcal{J}}]$. However since $\mathcal{F}_{\mathcal{J}}$ depends on all coordinates, it is difficult to analyze $\Ex[G_S | \mathcal{F}_{\mathcal{J}}]$ directly. To remedy this we define some auxiliary $\sigma$-algebras. For every $S \subseteq [n]$, define the collection $\mathcal{J}_S := \{\tilde{J}_T\}_{T \subseteq [n]}$ in the following way. For every $T \subseteq [n]$, we set $\tilde{J}_T := J_T$ if  $T \subseteq S$, and  $\tilde{J}_T \equiv 0$ otherwise. Note that $\mathcal{J} = \mathcal{J}_{[n]}$. Define $\tilde{g} := \sum_{S \in \mathcal{S}} \tilde{G}_S$, where
$\tilde{G}_S:=\Ex[G_S  | \mathcal{F}_{\mathcal{J}_S}]$.  For every $S \subseteq [n]$,
the function $\tilde{G}_S$ depends only on the coordinates in $S$ and furthermore
the $\sigma$-algebra $\mathcal{F}_{\mathcal{J}_S}$ is coarser than the $\sigma$-algebra
$\mathcal{F}_{\mathcal{J}}$. It follows from the latter that $\tilde{g}$ is measurable with respect to $\mathcal{F}_{\mathcal{J}}$ and hence
\begin{equation}
\label{eq:BoundConditional}
\left\|g - \Ex[g | \mathcal{F}_{\mathcal{J}}] \right\|_2 \le \left\|g - \tilde{g}\right\|_2.
\end{equation}
So in order to bound $\left\|g - \Ex[g | \mathcal{F}_{\mathcal{J}}] \right\|_2$, it suffices to bound the right-hand side of (\ref{eq:BoundConditional}). Trivially for every $S  \in \mathcal {S}$,
$$\int G_S \tilde{G}_S =\int \tilde{G}_S^2.$$
For $S_1, S_2 \in \mathcal{S}$, since $\tilde{G}_{S_1}$ and $\tilde{G}_{S_2}$  depend respectively only on the coordinates in $S_1$ and $S_2$, if
$S_1 \not\subseteq S_2$ and $S_2 \not\subseteq S_1$, then
$$\int G_{S_1}G_{S_2} = \int G_{S_1} \tilde{G}_{S_2} =\int  \tilde{G}_{S_1} G_{S_2} = 0.$$
Similarly for $S_1, S_2 \in \mathcal{S}$, if $S_1 \cap S_2 = \emptyset$, then
$$
\int \tilde{G}_{S_1} \tilde{G}_{S_2} = \left(\int \tilde{G}_{S_1} \right)\left(\int \tilde{G}_{S_2}\right) = \left(\int G_{S_1}\right) \left(\int G_{S_2}\right)=0.$$
Thus
\begin{eqnarray}
\nonumber
\|g - \tilde{g}\|_2^2 &=& \int \left(\sum_{S \in \mathcal{S}} G_S - \tilde{G}_S\right)^2 \le (k+1) \sum_{r=0}^k \int \left(\sum_{S \in \mathcal{S}, |S|=r} G_S - \tilde{G}_S\right)^2 \\ \nonumber
&=& (k+1)
\int \sum_{S \in \mathcal{S}} G_S^2 - \tilde{G}_S^2
+ \sum_{\substack{S_1,S_2 \in \mathcal{S},  S_1 \cap S_2 \neq \emptyset \\ S_1 \neq S_2, |S_1|=|S_2|}}\tilde{G}_{S_1} \tilde{G}_{S_2}
\\
&\le & 2k \int \sum_{S \in \mathcal{S}} |G_S^2 - \tilde{G}_S^2| \label{eq:same}
\\
&& + 2k  \sum_{\substack{S_1,S_2 \in \mathcal{S} \\ S_1 \cap S_2 \neq \emptyset, S_1 \neq S_2}}\left|\int \tilde{G}_{S_1} \tilde{G}_{S_2} \right|
\label{eq:distinct}
\end{eqnarray}

\noindent \emph{Bounding (\ref{eq:same}):} Consider an $S \in \mathcal{S}$ and an $x \in X^S$. By (\ref{eq:basicAboutXi}) if $|F_S(x)| > \epsilon_1$, then $J_S(x)=1$ which implies that $J_{\mathcal{J}_S}(x)=S$, and consequently $G_S(x)=\tilde{G}_S(x)$. Hence it follows from Part (a) of Lemma~\ref{lem:Properties} and Parseval's identity  that
\begin{eqnarray}
\nonumber
(\ref{eq:same})&=&2k \int \sum_{S \in \mathcal{S}} \left|G_S^2 - \tilde{G}_S^2\right| = 2k \int \sum_{S \in \mathcal{S}} |G_S -  \tilde{G}_S|^2
=2k \int \sum_{S \in \mathcal{S}} |G_S -  \tilde{G}_S|^2 1_{[|F_S| \le \epsilon_1]}
\\ &\le&  2k \int \sum_{S \in \mathcal{S}}G_S^2 1_{[|F_S| \le \epsilon_1]} \le 4 \epsilon_0 \sum_{S \in \mathcal{S}} \|F_S\|_2^2 \le 4 \epsilon_0.
\label{eq:boundSame}
\end{eqnarray}

\noindent \emph{Bounding (\ref{eq:distinct}):} We start with a lemma about the size of the atoms of the $\sigma$-algebra $\mathcal{F}_{\mathcal{J}_S}$.
\begin{lemma}
\label{lem:fat}
Consider $S \in \mathcal{S}$,  a subset $A \subseteq S$, and an element $y \in X^S$ with $J_{\mathcal{J}_S}(y)=A$. Then
$\int  1_{[J_{\mathcal{J}_S}(y_A,x_{S \setminus A}) = A]}  dx_{S \setminus A} \ge \frac{1}{2}$.
\end{lemma}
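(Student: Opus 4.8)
The goal is to show that once the map $J_{\mathcal{J}_S}$ lands on the value $A$ at the point $y$, the fibre over $A$ within the slice $\{(y_A,x_{S\setminus A}): x_{S\setminus A}\in X^{S\setminus A}\}$ has measure at least $\tfrac12$. The plan is to unwind the definition of $J_{\mathcal{J}_S}$: for $z\in X^S$ we have $J_{\mathcal{J}_S}(z)=\bigcup\{T\subseteq S: J_T(z_T)=1\}$. Hence $J_{\mathcal{J}_S}(z)=A$ is equivalent to the conjunction of two conditions: (i) for every $T\subseteq A$ that is ``active'', i.e. every $T$ with $J_T(z_T)=1$, we need no such $T$ to stick out of $A$; more precisely $J_T(z_T)=0$ whenever $T\not\subseteq A$; and (ii) the activated sets cover $A$, i.e. $\bigcup_{T\subseteq A,\,J_T(z_T)=1}T=A$. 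Since $J_{\mathcal{J}_S}(y)=A$, both conditions hold at $z=y$.

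First I would handle condition (ii): it depends only on the coordinates $z_A=y_A$, which are frozen to $y_A$ throughout the slice, so it continues to hold for every $z=(y_A,x_{S\setminus A})$ and contributes nothing to the measure loss. The real work is condition (i). For each $T\subseteq S$ with $T\not\subseteq A$, write $R:=T\cap A\subsetneq T$. We know $J_T(y_T)=0$, and in particular $J_T(z_T)=0$ for $z_R=y_R$; I want to invoke Lemma~\ref{lem:dichot} with this $T$ and this $R$. Since $J_T$ is $0$ at the point $(y_R, y_{T\setminus R})$ with $z_R=y_R$, the dichotomy rules out the ``$J_T(z)=1$ for all $z$ with $z_R=y_R$'' alternative, so we are in the first alternative: $\int J_T(y_R,x_{T\setminus R})\,dx_{T\setminus R}\le \delta_0$. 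Because the coordinates in $R\subseteq A$ are frozen to $y_R$ in our slice, the probability that $J_T(z_T)=1$ for a random $z=(y_A,x_{S\setminus A})$ is exactly this integral, hence at most $\delta_0=2^{-2k}$.

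Finally I would take a union bound over all $T\subseteq S$ with $T\not\subseteq A$ (there are fewer than $2^{|S|}\le 2^k$ of them, since $|S|\le k$): the measure of the set of $x_{S\setminus A}$ for which some bad $T$ is activated is at most $2^k\cdot 2^{-2k}=2^{-k}\le \tfrac12$. Therefore the fibre where none is activated — which by the discussion of (i) and (ii) is exactly $\{x_{S\setminus A}: J_{\mathcal{J}_S}(y_A,x_{S\setminus A})=A\}$ — has measure at least $1-2^{-k}\ge\tfrac12$, as claimed. The only subtle point, and the step I would be most careful about, is the bookkeeping that identifies the event $J_{\mathcal{J}_S}(y_A,x_{S\setminus A})=A$ with the intersection of conditions (i) and (ii) and checks that (ii) is automatically preserved; the application of Lemma~\ref{lem:dichot} with $R=T\cap A$ is the technical heart, and one must make sure the hypothesis ``$J_T(z)=0$ for at least one $z$ with $z_R=y_R$'' is met — which it is, by taking $z=y_T$ itself.
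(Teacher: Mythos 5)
Your proposal is correct and follows essentially the same route as the paper's proof: you decompose the event $J_{\mathcal{J}_S}(y_A,x_{S\setminus A})=A$ into the frozen condition on $y_A$ and the condition that every $T\subseteq S$ with $T\not\subseteq A$ remains inactive, apply Lemma~\ref{lem:dichot} with $R=T\cap A$ (ruling out the second alternative because $J_T(y_T)=0$), and finish with a union bound giving $1-2^k\delta_0\ge\tfrac12$. This matches the paper's argument step for step.
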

\begin{proof}
Since $J_{\mathcal{J}_S}(y)=A$, for every $R \subseteq S$ with $R \not\subseteq A$, we have $J_R(y)=0$. Then it follows from Lemma~\ref{lem:dichot} that for every such $R$, we have
$$\int  J_R(y_{A\cap R},x_{R \setminus A})  dx_{R \setminus A} \le \delta_0.$$
Hence
\begin{eqnarray*}
\int 1_{[J_{\mathcal{J}_S}(y_A,x_{S \setminus A}) = A]}  dx_{S \setminus A} &=& \int \prod_{R \subseteq S, R \not\subseteq A} 1_{[J_R(y_{A\cap R},x_{R \setminus A}) = 0]}    dx_{S \setminus A}\\ &\ge& 1 - \sum_{R \subseteq S, R \not\subseteq A} \int  J_R(y_{A \cap R},x_{R \setminus A})  dx_{S \setminus A} \ge 1 -2^k \delta_0 \ge \frac{1}{2}.
\end{eqnarray*}
\end{proof}

\begin{lemma}
\label{lem:GSdT}
Consider $S \in \mathcal{S}$, and a subset $T \subseteq S$. For every $y \in  X^T$, we have
$$\left|\int \tilde{G}_S(y,x_{S \setminus T})  dx_{S \setminus T} \right| \le 2^{k+1}  \sum_{R \subseteq T }\int a_S(y_R,x_{S \setminus R})   dx_{S \setminus R}. $$
Furthermore if $T \subsetneq S$ and $J_T(y)=1$, then $\int \tilde{G}_S(y,x_{S \setminus T})  dx_{S \setminus T}=0$.
\end{lemma}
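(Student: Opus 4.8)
The plan is to use the reduction (already invoked at the start of Section~\ref{sec:general}) to a finite probability space $X$, so that after discarding null points $\tilde G_S=\Ex[G_S\mid\mathcal{F}_{\mathcal{J}_S}]$ is literally an average over atoms. Denote by $\mu$ the relevant product measure and by $\mu(\{w\})$ the mass of a point $w$. Working inside $X^S$ (both $a_S$, $G_S$ and the $\sigma$-algebra $\mathcal{F}_{\mathcal{J}_S}$ see only the coordinates in $S$), the atoms of $\mathcal{F}_{\mathcal{J}_S}$ are the sets $P_{A,w}:=\{z\in X^S:z_A=w,\ J_{\mathcal{J}_S}(z)=A\}$ over $A\subseteq S$ and $w\in X^A$ with $\mu(P_{A,w})>0$, and on $P_{A,w}$ one has $\tilde G_S\equiv\mu(P_{A,w})^{-1}\int_{P_{A,w}}G_S$. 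Since $J_{\mathcal{J}_S}$ depends only on the coordinates in $S$, the product measure factors as $\mu(P_{A,w})=\mu(\{w\})\int 1_{[J_{\mathcal{J}_S}(w,x_{S\setminus A})=A]}\,dx_{S\setminus A}$, so Lemma~\ref{lem:fat} gives the crucial lower bound $\mu(P_{A,w})\ge\tfrac12\mu(\{w\})$.

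For the first (inequality) assertion, note that by Lemma~\ref{lem:Properties}(c) we have $0\le|G_S|\le a_S$, so monotonicity of conditional expectation gives $|\tilde G_S|\le\Ex[a_S\mid\mathcal{F}_{\mathcal{J}_S}]$ pointwise; hence it suffices to bound $\int\Ex[a_S\mid\mathcal{F}_{\mathcal{J}_S}](y,x_{S\setminus T})\,dx_{S\setminus T}$, which expanded over atoms equals $\sum_{A\subseteq S}\sum_{w}\mu(P_{A,w})^{-1}\big(\int_{P_{A,w}}a_S\big)\,\mu\big(\{v\in X^{S\setminus T}:(y,v)\in P_{A,w}\}\big)$. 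A term can be nonzero only when $w$ agrees with $y$ on $A\cap T$, and then $\mu(\{v:(y,v)\in P_{A,w}\})\le\mu(\{w|_{A\setminus T}\})$, while, using $\mu(\{w\})=\mu(\{y_{A\cap T}\})\,\mu(\{w|_{A\setminus T}\})$ and the atom lower bound, $\mu(P_{A,w})^{-1}\le 2\,\mu(\{y_{A\cap T}\})^{-1}\mu(\{w|_{A\setminus T}\})^{-1}$. Thus the sum is at most $2\sum_{A\subseteq S}\mu(\{y_{A\cap T}\})^{-1}\sum_{w:\,w|_{A\cap T}=y_{A\cap T}}\int_{P_{A,w}}a_S$, and for fixed $A$, with $R:=A\cap T$, the atoms $P_{A,w}$ with $w|_R=y_R$ partition $\{z:z_R=y_R,\ J_{\mathcal{J}_S}(z)=A\}$, so the inner sum is at most $\int_{\{z:z_R=y_R\}}a_S=\mu(\{y_R\})\int a_S(y_R,x_{S\setminus R})\,dx_{S\setminus R}$, cancelling the factor $\mu(\{y_R\})^{-1}$. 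Finally, grouping the outer sum over $A\subseteq S$ by the value $R=A\cap T$ --- there are $\le 2^{|S\setminus T|}\le 2^{k}$ sets $A$ with $A\cap T=R$ since $|S|\le k$ --- yields the asserted bound $2^{k+1}\sum_{R\subseteq T}\int a_S(y_R,x_{S\setminus R})\,dx_{S\setminus R}$.

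For the second assertion, suppose $T\subsetneq S$ and $J_T(y)=1$. If $z\in X^S$ satisfies $z_T=y$ then $J_T(z)=J_T(y)=1$, hence $J_{\mathcal{J}_S}(z)\supseteq T$; consequently only atoms $P_{A,w}$ with $A\supseteq T$ and $w|_T=y$ meet $\{z:z_T=y\}$ and hence contribute to $\int\tilde G_S(y,x_{S\setminus T})\,dx_{S\setminus T}$. For such an atom, factoring the product measure across $T$ and $S\setminus T$ gives both $\mu(P_{A,w})=\mu(\{y\})\,\mu(\{v:(y,v)\in P_{A,w}\})$ and $\int_{P_{A,w}}G_S=\mu(\{y\})\int 1_{[(y,v)\in P_{A,w}]}G_S(y,v)\,dv$, so its contribution is exactly $\int 1_{[(y,v)\in P_{A,w}]}G_S(y,v)\,dv$. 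Summing over all admissible $(A,w)$ and using that for each $v$ precisely one such atom contains $(y,v)$ --- namely $A=J_{\mathcal{J}_S}(y,v)\supseteq T$ and $w=(y,v)_A$ --- the whole expression collapses to $\int G_S(y,x_{S\setminus T})\,dx_{S\setminus T}$, which vanishes by Definition~\ref{def:Walsh}(ii) because $S\setminus T\neq\emptyset$.

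The only real work is the bookkeeping in the first assertion: keeping the atom-mass lower bound from Lemma~\ref{lem:fat} aligned with the factorization $\mu(\{w\})=\mu(\{y_{A\cap T}\})\,\mu(\{w|_{A\setminus T}\})$ and then recombining atoms that share a common intersection with $T$ --- the factor $2^{k}$ arising precisely from the $\le 2^{|S\setminus T|}$ choices of such an atom's index set. The second assertion, once the observation ``$z_T=y\Rightarrow T\subseteq J_{\mathcal{J}_S}(z)$'' is made, is a routine collapse onto a vanishing Walsh integral.
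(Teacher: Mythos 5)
Your proof is correct and follows essentially the same route as the paper's: both assertions are handled by expanding $\tilde G_S$ over the atoms of $\mathcal{F}_{\mathcal{J}_S}$, using Lemma~\ref{lem:fat} for the factor $2$ and Lemma~\ref{lem:Properties}(c) for the domination by $a_S$, then grouping the atoms by $R=A\cap T$ to collect the $2^{k+1}$, and for the second assertion collapsing the atom sum onto $\int G_S(y,x_{S\setminus T})\,dx_{S\setminus T}=0$. Your version merely makes the atom-level measure bookkeeping (which the paper leaves implicit) fully explicit.
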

\begin{proof}
Consider $A \subseteq S$, and $z \in X^S$ with $J_{\mathcal{J}_S}(z)=A$. By Lemma~\ref{lem:fat} and Part (c) of Lemma~\ref{lem:Properties}, we have
\begin{eqnarray*}
|\tilde{G}_S(z)| &=&  \frac{\left|\int G_S(z_A,x_{S \setminus A}) 1_{[J_{\mathcal{J}_S}(z_A,x_{S \setminus A})=A]} dx_{S \setminus A}\right|}{\int 1_{[J_{\mathcal{J}_S(z_A,x_{S \setminus A})}=A]} dx_{S \setminus A}} \\ &\le&  2 \left|\int G_S(z_A,x_{S \setminus A}) 1_{[J_{\mathcal{J}_S}(z_A,x_{S \setminus A})=A]} dx_{S \setminus A}\right| \\ &\le& 2 \int a_S(z_A,x_{S \setminus A}) 1_{[J_{\mathcal{J}_S}(z_A,x_{S \setminus A})=A]} dx_{S \setminus A} \le  2 \int a_S(z_A,x_{S \setminus A})  dx_{S \setminus A}.
\end{eqnarray*}
Hence for every $y \in X^T$,
\begin{eqnarray*}
\left|\int \tilde{G}_S(y,x_{S \setminus T})  dx_{S \setminus T} \right| &\le& \sum_{A \subseteq S} \left|\int \tilde{G}_S(y,x_{S \setminus T}) 1_{[J_{\mathcal{J}_S}(y,x_{S \setminus T})=A]}  dx_{S \setminus T} \right|
\\ &\le& 2 \sum_{A \subseteq S} \int   a_S(y_{A \cap T},x_{S \setminus (A \cap T)})  dx_{S \setminus (A \cap T)}  \le 2^{k+1}  \sum_{R \subseteq T }\int a_S(y_R,x_{S \setminus R})   dx_{S \setminus R}.
\end{eqnarray*}
This verifies the first assertion of the lemma. Now suppose that $T \subsetneq S$ and $J_T(y)=1$. Then for every $z \in X^S$ with $z_T=y$, we have $J_{\mathcal{J}_S}(z) \supseteq T$. Hence
\begin{eqnarray*}
\int \tilde{G}_S(y,x_{S \setminus T})  dx_{S \setminus T} &=& \sum_{A: T \subseteq A \subseteq S} \int \tilde{G}_S 1_{[J_{\mathcal{J}_S}=A]}  dx_{S \setminus T} =
\sum_{A: T \subseteq A \subseteq S} \int \int \tilde{G}_S 1_{[J_{\mathcal{J}_S}=A]} dx_{S \setminus A} dx_{S \setminus T}
\\&=& \sum_{A: T \subseteq A \subseteq S} \int \int G_S 1_{[J_{\mathcal{J}_S}=A]} dx_{S \setminus A} dx_{S \setminus T} =  \int G_S(y,x_{S \setminus T})  dx_{S \setminus T} =0.
\end{eqnarray*}
\end{proof}

Now consider two distinct $S_1,S_2 \in \mathcal {S}$ with $T:=S_1 \cap S_2 \neq \emptyset$. By the second assertion of Lemma~\ref{lem:GSdT}, for $y \in X^T$, if $J_T(y) = 1$, then
$$\int \tilde{G}_{S_1}(y,x_{[n] \setminus T}) \tilde{G}_{S_2}(y,x_{[n] \setminus T}) dx_{[n] \setminus T}= 0.$$
It follows that
\begin{eqnarray*}
(\ref{eq:distinct}) &=& 2k\sum_{\substack{S_1,S_2 \in \mathcal {S} \\S_1 \neq S_2, S_1 \cap S_2 \neq \emptyset}} \left| \int \tilde{G}_{S_1} \tilde{G}_{S_2}\right|
= 2k \sum_{T \subseteq [n]} \sum_{\substack{S_1,S_2 \in \mathcal {S} \\S_1 \neq S_2, S_1 \cap S_2=T}} \left|\int \tilde{G}_{S_1} \tilde{G}_{S_2} 1_{[J_{T}=0]} \right| \\
&=& 2k\sum_{T \subseteq [n]} \sum_{\substack{S_1,S_2 \in \mathcal {S} \\S_1 \neq S_2, S_1 \cap S_2=T}} \left|\int \left(\int \tilde{G}_{S_1} dx_{[n] \setminus T} \right)\left(\int \tilde{G}_{S_2} dx_{[n] \setminus T}\right)  1_{[J_{T}=0]} dx_{T} \right|
\\
&\le & 2k\sum_{T \subseteq [n]} \int   \left( \sum_{S \in \mathcal{S}, S \supseteq T} \left| \int \tilde{G}_{S} dx_{[n] \setminus T} \right|\right)^2 1_{[J_{T}=0]} dx_T
\end{eqnarray*}
Now applying the first assertion of Lemma~\ref{lem:GSdT}, we have
\begin{equation}
\label{eq:distinctProcessed}
(\ref{eq:distinct}) \le 2k\sum_{T \subseteq [n]} \int   \left(2^{k+1}  \sum_{S \in \mathcal{S}, S \supseteq T}\sum_{R \subseteq T }\int a_S   dx_{[n] \setminus R}\right)^2 1_{[J_{T}=0]} dx_T.
\end{equation}
Recall from (\ref{eq:basicAboutXiII}) that $\xi_T \le J_T$. Hence if $J_{T}(y)=0$, for $y \in X^T$, then
$$ \sum_{R \subseteq T} \sum_{S \in \mathcal{S}: S \supseteq T} \int a_S(y_R,x_{[n]\setminus R}) dx_{[n]\setminus R} \le \epsilon_2.$$
So (\ref{eq:distinctProcessed}) together with Part (d) of Lemma~\ref{lem:Properties} implies that
\begin{eqnarray}
\nonumber
(\ref{eq:distinct}) &\le&  2k\sum_{T \subseteq [n]} 2^{3k} \epsilon_2 \int \left( \sum_{S \in \mathcal{S}, S \supseteq T} \sum_{R \subseteq T }\int a_S   dx_{[n] \setminus R} \right)dx_{T} \\ &\le&  2k 2^{3k} \epsilon_2 \sum_{T \subseteq [n]} \sum_{R \subseteq T } \sum_{\substack{S \in \mathcal{S}\\ S \supseteq T}} \int  a_S
\le 2k 2^{5k} \epsilon_2  \sum_{S \in \mathcal{S}}  \int  a_S   dx_{[n]} \le 2k 2^{5k} \epsilon_2 \delta^{-3k}
\le \epsilon_0.
\label{eq:boundDistinct}
\end{eqnarray}
Combining (\ref{eq:BoundConditional}), (\ref{eq:boundSame})  and (\ref{eq:boundDistinct}), we conclude that
\begin{equation}
\label{eq:boundProjectg}
\left\|g - \Ex[g | \mathcal{F}_{\mathcal{J}}] \right\|^2_2 \le \|g - \tilde{g}\|_2^2 \le 5 \epsilon_0.
\end{equation}

\subsection{Step VI: Finishing the proof\label{sec:Step6}}

By (\ref{eq:cutFourier}) and (\ref{eq:proningSmall}), we have
$$\int \left|f - \sum_{S \in \mathcal{S}} F_S \right|^2 \le 2 \epsilon_0.$$
Recall that $g=\sum_{S \in \mathcal{S}} G_S$. By Part (b) of Lemma~\ref{lem:Properties},
$$\int \left|\sum_{S \in \mathcal{S}} F_S - \sum_{S \in \mathcal{S}} G_S \right|^2 =  \sum_{S \in \mathcal{S}} \int \left| F_S - G_S\right|^2
\le \sum_{S \in \mathcal{S}} 10 \epsilon_0 k^{-1} \|F_S\|_2^2 \le 10 k^{-1}\epsilon_0 \le \epsilon_0.$$
Hence
$$\int |f - g|^2 \le 2 \int \left|f - \sum_{S \in \mathcal{S}} F_S\right|^2 +  \left|\sum_{S \in \mathcal{S}} F_S - \sum_{S \in \mathcal{S}} G_S\right|^2 \le  4 \epsilon_0 + 2 \epsilon_0 \le 6 \epsilon_0.$$
Then by (\ref{eq:boundProjectg}),
$$\int |f-\Ex[f|\mathcal{F}_\mathcal{J}|^2 \le \int |f-\Ex[g | \mathcal{F}_{\mathcal{J}}]|^2 \le 2 \int |f-g|^2 + |g-\Ex[g | \mathcal{F}_{\mathcal{J}}]|^2 \le 12 \epsilon_0 + 10\epsilon_0 \le 22 \epsilon_0.$$
Define $h:X^n \rightarrow \{0,1\}$ as
$$h(x) :=
\left\{
\begin{array}{lcl}
0 & \qquad & \Ex[f|\mathcal{F}_\mathcal{J}]  \le \frac{1}{2}\\
1 & & \Ex[f|\mathcal{F}_\mathcal{J}]  > \frac{1}{2}.
\end{array}
\right.
$$
Note that $h$ is a Boolean function and it is measurable with respect to $\mathcal{F}_\mathcal{J}$. Since $f$ is a Boolean function, we have
\begin{eqnarray*}
\|f-h\|_1  &=& \int |f-h|^2\le \int 4|f-\Ex[f|\mathcal{F}_\mathcal{J}]|^2 \le 90 \epsilon_0  \le \epsilon.
\end{eqnarray*}
We also verified in (\ref{eq:boundMathcalJ}) that
$\int |J_\mathcal{J}(x)| dx\le  k\epsilon_2^{-2} \le e^{10^{15}  \epsilon^{-3} \lceil I_f \rceil ^3}.$

\section{Concluding remarks\label{sec:concl}}
\begin{itemize}

\item For graph properties which are also  \emph{increasing} (as it is the case when one studies the threshold phenomenon), what is shown by Friedgut~\cite{MR1678031}  is slightly stronger than the assertion of Theorem~\ref{thm:mainpbiased}. He showed that in this case, one can deduce from the structure provided in Theorem~\ref{thm:mainpbiased} that the property is essentially generated by small minimal elements. He then conjectured this statement (Conjecture 1.5 in~\cite{MR1678031}) for general increasing set properties. Friedgut's conjecture remains unsolved as we do not know how to refine the structure provided in Theorem~\ref{thm:mainpbiased} for increasing functions without appealing to any symmetry assumptions.

\item In~\cite{BKKKL} a different notion of the influence is defined, and in~\cite{MR2501432} the structure of Boolean functions that have small total influences with the definition of~\cite{BKKKL} is determined.
\end{itemize}

\section*{Acknowledgements}
The author is grateful to Serguei Norine for pointing the connection to the FKG inequality  which led to the proof of Corollary~\ref{cor:monotone}. He also wishes to thank him and Pooya Hatami for many valuable discussions, and Elchanan Mossel and Mokshay Madiman for pointing out the references~\cite{MR615434,MR0026294}.

\bibliographystyle{alpha}
\bibliography{pseudojunta}

\end{document}